\documentclass[11pt]{article}
\usepackage{graphicx}
\usepackage{amsmath,amsfonts,amsthm}
\usepackage{amssymb,latexsym}
\usepackage{fixmath}
\usepackage{mathrsfs,amsbsy}
\usepackage{dsfont}
\usepackage{enumerate}

\def\wtd{\widetilde}
\def\what{\widehat}

\DeclareMathOperator{\diag}{diag}

\DeclareMathOperator{\rank}{rank}

\DeclareMathOperator{\HH}{H}
\DeclareMathOperator{\T}{T}

\def\ba{\pmb{a}}
\def\bb{\pmb{b}}
\def\bc{\pmb{c}}

\def\be{\pmb{e}}

\def\bp{\pmb{p}}
\def\bq{\pmb{q}}

\def\bs{\pmb{s}}
\def\bt{\pmb{t}}

\def\bv{\pmb{v}}
\def\bw{\pmb{w}}
\def\bx{\pmb{x}}
\def\by{\pmb{y}}
\def\bz{\pmb{z}}

\newtheorem{proposition}{Proposition}[section]
\newtheorem{theorem}{Theorem}[section]
\newtheorem{lemma}{Lemma}[section]
\newtheorem{corollary}{Corollary}[section]

\theoremstyle{definition}

\newtheorem{remark}{Remark}[section]

\numberwithin{equation}{section}
\numberwithin{figure}{section}
\numberwithin{table}{section}

\allowdisplaybreaks

\usepackage{kbordermatrix}
\usepackage{caption}
\usepackage{amsfonts,amsthm}
\usepackage{amsmath,amssymb,enumerate,color}
\usepackage{algorithm,algorithmic}
\usepackage{epsfig,graphicx,psfrag,mathrsfs,subfigure}
\usepackage{eucal}
\usepackage{yhmath}
\usepackage{hyperref}
\hypersetup{colorlinks}
\usepackage{diagbox}
\RequirePackage{multirow}
\usepackage{textcomp}
\usepackage[]{fontenc}
\usepackage{extarrows}
\def\wtd{\widetilde}
\def\what{\widehat}
\usepackage{rotating}
\usepackage{lscape}
\usepackage{bm}
\usepackage{xspace}
\usepackage{tikz}
\usetikzlibrary{petri} 
\usetikzlibrary{shadows,arrows,positioning,shapes.geometric} 
\usetikzlibrary{calc} 
\usepackage{geometry}
 \usepackage{lipsum}

\def\ba{\pmb{a}}
\def\bb{\pmb{b}}
\def\bc{\pmb{c}}

\def\be{\pmb{e}}

\def\bp{\pmb{p}}
\def\bq{\pmb{q}}

\def\bs{\pmb{s}}
\def\bt{\pmb{t}}

\def\bv{\pmb{v}}
\def\bw{\pmb{w}}
\def\bx{\pmb{x}}
\def\by{\pmb{y}}
\def\bz{\pmb{z}}
\def\bzs{\mathbf{0}}

\def\diag{{\rm diag}}

\def\scrR{\mathscr{R}}

\def\wtd{\widetilde}
\def\what{\widehat}

\def\bbC{\mathbb{C}}

\def\bbP{\mathbb{P}}
\def\bbR{\mathbb{R}}
\def\bbS{\mathbb{S}}

\def\RE{\mathrm{Re}}
\def\IM{\mathrm{Im}}

\renewcommand{\algorithmicrequire}{\textbf{Input:}}
\renewcommand{\algorithmicensure}{\textbf{Output:}}

\numberwithin{equation}{section}
\numberwithin{figure}{section}
\numberwithin{table}{section}

\graphicspath{{figures/}{figure/}{pictures/}%
	{picture/}{pic/}{pics/}{image/}{images/}}

\title{Optimality Conditions for Rational Minimax  Approximations: Bridging Ruttan's Criteria to Dual-Based Methods
}
\author{Lei-Hong Zhang\thanks{School of Mathematical Sciences, Soochow University, Suzhou 215006, Jiangsu, China. This work was
 supported in part by the National Natural Science Foundation of China (NSFC-12471356, NSFC-12371380), Jiangsu Shuangchuang Project (JSSCTD202209) and  Academic Degree and Postgraduate Education Reform Project of Jiangsu Province.
        Email: {\tt longzlh@suda.edu.cn}.}
        }
 
 \date{\today}

\begin{document}

\maketitle

\begin{abstract}
This paper presents a theoretical discussion on Ruttan's optimality conditions for rational minimax approximations in discrete and continuum settings, integrating analytical foundations with computational practice. 
 We develop extended second-order optimality criteria for the discrete case, demonstrating that Ruttan's sufficient condition for global solutions [Ruttan, {\it Constr. Approx.}, 1 (1985), 287-296]  becomes necessary when the number of extreme points is minimal. Our analysis further uncovers fundamental relationships between these conditions and the dual-based {\tt d-Lawson} method [L.-H. Zhang et al., {\it Math. Comp.}, 94 (2025), 2457-2494], proving that strong duality in {\tt d-Lawson}  ensures simultaneous satisfaction of both Ruttan's and Kolmogorov's criteria. Additionally, we show that  minimax approximants on a continuum satisfying Ruttan's sufficient global optimality  can be captured through discrete minimax approximations at properly chosen boundary points, thereby enabling efficient computation of minimax approximants on a continuum using discrete methods.
\end{abstract}

\medskip
{\small
{\bf Key words. rational minimax approximation, Kolmogorov's condition, Ruttan's condition, Lawson's iteration, duality}    
\medskip

{\bf AMS subject classifications. 41A52, 41A50, 65D15, 49K35, 41A20}
} 
 
 
 \section{Introduction}\label{sec:intro}
The theory of minimax (a.k.a. Chebyshev, best or uniform) approximation originates in the pioneering work of Poncelet \cite{ponc:1835}, Chebyshev \cite{cheb:1854,cheb:1859} and Haar \cite{haar:1918}, who established the existence and uniqueness of the minimax real polynomial  $p\in \bbP_n$ with degree at most $n$  to a continuous function $f$ on an interval, where optimality is characterized by the well-known alternation theorem (see \cite[p. 75]{chen:1982}).
For compact subsets of  $\mathbb{C}$, the minimax complex polynomial   $p\in\mathbb{P}_n$ retains uniqueness and is characterized by Kolmogorov's necessary and sufficient condition \cite{kolm:1948a}. These results extend further to Haar-type subspaces in general normed spaces equipped with the supremum norm (see e.g., \cite{chen:1982,mein:1967,rice:1969,rish:1961,shap:1971,sing:1970}). 
For historical perspectives on approximation theory, readers may consult \cite{appr:web} 
as a valuable resource.

The optimality conditions for (local or global) rational minimax approximations are less complete than those for linear minimax approximations \cite{chen:1982,mein:1967,rice:1969,rish:1961,shap:1971,sing:1970}. These rational minimax approximations exhibit strong domain dependence, lacking universal necessary and sufficient conditions for characterizing global minimax solutions \cite{gutk:1983,sing:2006}. 
Furthermore, fundamental distinctions arise in the optimality criteria for (a) compact continuum domains versus discrete point sets, (b) real-valued versus complex-valued function approximations.
For the complex case, optimality conditions for the rational minimax approximations up to 1983 are surveyed in \cite{gutk:1983}.

Let the set of rational functions of type $(n_1,n_2)$ be\footnote{By convention, $q \equiv 1$ when $n_2 = 0$.}
$$\mathscr{R}_{(n_1,n_2)} \triangleq \left\{ \frac{p}{q} \, \bigg| \, p \in \mathbb{P}_{n_1},~ 0 \not\equiv q \in \mathbb{P}_{n_2} \right\}, ~n_1\ge 0, n_2\ge 0, $$
where $\mathbb{P}_n$ consists of complex polynomials of degree at most $n$. 
For a given irreducible $\xi(x)=p(x)/q(x)\in\scrR_{(n_1,n_2)}$, the {\it defect}  of $\xi(x)$ is defined by
\begin{equation}\label{eq:defect}
\upsilon(p,q)\triangleq\min(n_1-\deg(p),n_2-\deg(q)),
\end{equation} 
where  $\deg(p)$ and $\deg(q)$ denote the degree of  $p$ and $q$, respectively.  When $\upsilon(p,q)=0$, we say $\xi(x)=p(x)/q(x)$ is {\it non-degenerate}. 

Let $\mathcal{B} \subset \mathbb{C}$ be a compact set and $f\in {\bf C}({\cal B})$ be a continuous function on $\mathcal{B}$. Define the supremum norm $\|f\|_{\infty,\mathcal{B}} = \sup_{x \in \mathcal{B}} |f(x)|$, and consider the rational minimax approximation problem 
\begin{equation}\label{eq:bestf0}
\eta_{\cal B}\triangleq\inf_{\xi \in \mathscr{R}_{(n_1,n_2)}} \|f-\xi\|_{\infty,\mathcal{B}}. 
\end{equation}
For $\xi \in \mathscr{R}_{(n_1,n_2)}$, let $\zeta(\xi) \triangleq \sup_{x \in \mathcal{B}} |f(x) - \xi(x)|$ denote its maximum approximation error.   Walsh's foundational result \cite{wals:1931,wals:1969} ensures the existence of a minimax approximant $\hat \xi \in \mathscr{R}_{(n_1,n_2)}$ whenever $\mathcal{B}$ contains no isolated points, satisfying
$$\|f - \hat \xi \|_{\infty,\mathcal{B}} = \eta_{\cal B}.$$
However, this existence guarantee does not extend to discrete node sets\footnote{
The discrete topology is implicitly assumed whenever working with finite point sets in minimax approximation theory, distinguishing these problems fundamentally from the continuous case where the standard topology of $\mathbb{C}$ is used.} $\mathcal{X} = \{x_j\}_{j=1}^m$ ($m\ge n_1+n_2+2$). Indeed, the infimum $\eta_{\cal B}$ of \eqref{eq:bestf0} may be unattainable, and even when attainable, the minimax approximation need not be unique \cite{natr:2020,tref:2019a}. Furthermore, uniqueness remains non-guaranteed even for a Jordan domain ${\cal B}$, and local best approximants may exist \cite{gutk:1983,sava:1977,this:1993,wals:1969,will:1979}.

As mentioned, optimality conditions for (local or global) rational minimax approximations depend critically on the domain structure. To present our results more clearly, we distinguish the domain of the approximated function $f$ using different notations: 
\begin{itemize}
\item  $\mathcal{B} \subset \mathbb{C}$ represents a general compact set; it can be a continuum, or a set containing finite points. ${\bf C}({\cal B})$ is the set of continuous functions on ${\cal B}$.

\item $\Omega \subset \mathbb{C}$ denotes a compact continuum with boundary $\Gamma := \partial \Omega$. Typical examples include closed intervals in real or imaginary axis, the unit circle, or simply connected domains bounded by Jordan curves. We denote by ${\bf C}_{A}(\Omega)$ the class of functions that are continuous on $\Omega$ and analytic in its interior whenever $\mathrm{int}(\Omega) \neq \emptyset$.

\item $\mathcal{X} = \{x_j\}_{j=1}^m$ ($m\ge n_1+n_2+2$) is a  discrete set. 
In dealing with approximation on $f\in {\bf C}_{A}(\Omega)$, by maximum modulus principle, efficient numerical methods are usually employed to solve \eqref{eq:bestf0} with carefully chosen discrete nodes ${\cal B}={\cal X}\subset \Gamma$.  
\end{itemize}

For a general compact ${\cal B}$, if the infimum $\eta_{\cal B}$ of \eqref{eq:bestf0} is attainable by $\hat \xi$, we
denote the error function by 
$\hat e(x)  \triangleq f(x)-\hat\xi(x)$ and the maximum error by $\|\hat e \|_{\infty,{\cal B}}$. 
The set of extreme points (a.k.a. reference points) are then
 \begin{equation}\label{eq:extremalsetY}
 {\cal E}(\hat \xi)\triangleq\left\{x\in {\cal B}:\left|f(x)- \hat \xi(x)\right|=\|\hat e \|_{\infty,{\cal B}}\right\}.
\end{equation}
The set ${\cal B}$ can be interpolated as $\Omega$, the boundary  $\Gamma$ or the discrete set ${\cal X}$ usually sampled in  $\Gamma$.

{\it Motivation and contributions.}
While optimality conditions for rational minimax approximations have been studied in the literature, the majority of existing results focus on theoretical analyses under the   assumption that ${\cal B}$ constitutes an infinite or continuum set \cite{gutk:1983,rutt:1985,this:1993}.
Recent years, particularly since 2018, have seen significant advances in efficient numerical methods for discrete rational approximations, offering new computational approaches. Notable developments include the adaptive Antoulas-Anderson (AAA) algorithm \cite{nase:2018,hutr:2023,drnt:2024} and its variant AAA-Lawson \cite{fint:2018}, surveyed extensively in \cite{nast:2023,natr:2026}.  Other methods, such as the rational Krylov fitting (RKFIT) \cite{begu:2017,gogu:2021}, the Loeb algorithm \cite{loeb:1957}, the SK iteration \cite{sako:1963}, and its stabilized version \cite{hoka:2020}, have also proven highly effective for computing rational (not necessarily minimax) approximations, with demonstrated success across diverse applications \cite{coop:2007,limp:2022,widt:2022,zhzy:2025}. 
Meanwhile,  building on a new dual-based framework, recent advancements in Lawson-type iterations \cite{yazz:2023,zhha:2025,zhyy:2025,zhzz:2025,zhzh:2025} have further expanded the algorithmic landscape for discrete rational minimax approximation \eqref{eq:bestf0}.

As previously noted, discrete rational minimax approximations \eqref{eq:bestf0} are not guaranteed to admit a solution, and their optimality conditions often depend on the domain. An example arises from Ruttan's necessary conditions \cite[Theorems 1.3 and 1.4]{rutt:1985}, derived from the second-order variational analysis (specifically, the $\lambda^2$-term $\varsigma$ in \eqref{eq:variation}). Although originally formulated for arbitrary compact sets $\mathcal{B}$, these conditions were later shown to fail for general continuum subsets of $\bbC$ \cite{this:1993}.

Based on these considerations, we conduct a systematic review of the local optimality conditions for rational minimax approximations \eqref{eq:bestf0}, synthesizing established relevant results from the literature while incorporating essential corrections. Our analysis particularly focuses on how the local necessary conditions determine their applicability across distinct classes of $\mathcal{B}$.  For a  local irreducible  solution $\hat{\xi}(x) = \hat{p}(x)/\hat{q}(x)$ of \eqref{eq:bestf0}, the argument hinges on the variational expression \cite{kolm:1948a,rutt:1985} for sufficient small $\lambda\in \bbR$:
\begin{equation}
\label{eq:variation}
\delta_{\lambda}(x) \triangleq |f(x) - \xi_{\lambda}(x)|^2 - |f(x) - \hat{\xi}(x)|^2 = \frac{\lambda \kappa(x) + \lambda^2   \varsigma(x) + O(\lambda^3)}{|q_\lambda(x)|^2}, \quad x \in \mathcal{B},
\end{equation}
where $\xi_{\lambda}(x) = p_{\lambda}(x)/q_{\lambda}(x)$ is a parameterized perturbation of $\hat{\xi}(x)$, defined via
$$p_{\lambda}(x) = \hat{p}(x) + \lambda s(x) + \lambda^2 a(x) \in \bbP_{n_1}, \quad
q_{\lambda}(x) = \hat{q}(x) + \lambda t(x) + \lambda^2 b(x) \in \bbP_{n_2}.  $$
Here $s, a \in \mathbb{P}_{n_1}$ and $t, b \in \mathbb{P}_{n_2}$.
The first-order analysis (governed by the $\lambda$-term $\kappa(x)$) leads to the Kolmogorov criterion, while second-order analysis (via the $\lambda^2$-term $\varsigma(x)$) furnishes Ruttan's optimality conditions. To derive their dual forms,  the classical Carathéodory Lemma \cite[Lemma 2.3.1]{shap:1971} and linear programming duality \cite[Chapters 12 and 13]{nowr:2006} are used. A summary of these local optimality conditions is given in Section \ref{subsec:summary}. This constitutes one of contributions of this paper. 

Compared to local optimality conditions, sufficient conditions for global rational minimax approximants remain scarce and often impractical for numerical implementation.
Within the new dual-based framework, a Lawson-type iteration, the {\tt d-Lawson} algorithm, was proposed in \cite{zhyy:2025} and the convergence is recently developed in \cite{zhha:2025}. {\tt d-Lawson} is a method for solving the max-min type dual problem of the original discrete minimax problem \eqref{eq:bestf0}.  Crucially, Ruttan's sufficient condition \cite[Theorem 2.1]{rutt:1985} for global optimality emerges as the theoretical linchpin for {\tt d-Lawson}'s efficacy. In the second part of this paper, we   establish 
\begin{itemize}
\item[(1)] the relationship between Ruttan's sufficient global optimality and a key property in the max-min dual problem: strong duality; 
\item[(2)] a result  that Ruttan's sufficient global optimality becomes necessary for $\hat \xi$ when the number of extreme points is minimal (i.e., $|\mathcal{E}(\hat{\xi})| = n_1 + n_2 + 2 - \upsilon(\hat{p},\hat{q})$); 
\item[(3)] that strong duality ensures simultaneous fulfillment of both Ruttan's and Kolmogorov's criteria.
\end{itemize}
These results constitute our second contribution.

Our final theoretical contribution determines a sufficient condition under which the rational minimax approximant $\hat{\xi}_{\Omega}$ of $f\in {\bf C}_{A}(\Omega)\setminus \scrR_{(n_1,n_2)}$ on a Jordan domain $\Omega \subset \mathbb{C}$ can be obtained through discrete approximation using suitable nodes $\mathcal{X}$ sampled in its Jordan curve $\Gamma=\partial \Omega$. This highlights a fundamental difference between rational and polynomial approximation: while the polynomial minimax solution $\hat{\xi}_{\Omega}$ of \eqref{eq:bestf0} with $\mathcal{B}=\Omega$ can always be exactly reconstructed (or approximated arbitrarily well) from its discrete version \eqref{eq:bestf0} with $\mathcal{B}=\mathcal{X}$ when $\mathcal{X}$ contains the extreme points in an extremal signature \cite{rish:1961} (or meets appropriate density conditions on $\Gamma = \partial\Omega$ \cite[Chapter 3]{chen:1982}), the rational case exhibits fundamentally different behavior - counterexamples exist where $\hat{\xi}_{\Omega}$ cannot be recovered (nor even closely approximated) by any discrete solution $\hat{\xi}_{\mathcal{X}}$ with $\mathcal{X} \subset \Gamma$, regardless of sampling density (see Section \ref{sec:boundcont}). We establish a   sufficient condition ensuring that $\hat{\xi}_{\Omega}$ indeed solves an appropriate discrete minimax approximation problem with ${\cal X}\subseteq \Gamma$, thereby providing a theoretical basis for practical computation of continuum minimax approximants via discrete methods.

{\it Paper organization.} This paper is organized as follows. Section \ref{sec:localOpt} examines optimality conditions for local minimax approximants, reviewing both polynomial and rational cases, including Kolmogorov's criteria (first-order conditions) and Ruttan's criteria (second-order conditions). In Section \ref{sec:RuttanSuf}, we focus on Ruttan's sufficient condition for global solutions of \eqref{eq:bestf0} in discrete minimax problems, demonstrating its necessity when $\hat{\xi}$ has the minimal number of extreme points ($|\mathcal{E}(\hat{\xi})| = n_1 + n_2 + 2 - \upsilon(\hat{p},\hat{q})$). Section \ref{sec:dlawsonFound} concerns the theoretical foundation for the {\tt d-Lawson} iteration by showing that strong duality ensures both Ruttan's and Kolmogorov's criteria. Subsequently, Section \ref{sec:boundcont} explores the connection between rational minimax approximations on continua and discrete sets, presenting a sufficient condition to recover continuum minimax approximants from discrete ones. Finally, concluding remarks are summarized in Section \ref{sec:conclude}.

{\it Notation}.
We adhere to the notation used in \cite{zhyy:2025}. The imaginary unit is denoted by $\mathtt{i} = \sqrt{-1}$. For a complex number $\mu = \mu^{\mathtt{r}} + \mathtt{i} \mu^{\mathtt{i}} \in \bbC$, we define its modulus and complex conjugate as 
$
|\mu| = \sqrt{(\mu^{\mathtt{r}})^2 + (\mu^{\mathtt{i}})^2}, \quad \overline{\mu} = \mu^{\mathtt{r}} - \mathtt{i} \mu^{\mathtt{i}},
$
where $\RE(\mu) = \mu^{\mathtt{r}} \in \bbR$ and $\IM(\mu) = \mu^{\mathtt{i}} \in \bbR$ represent the real and imaginary parts of $\mu$, respectively. 
Column vectors are denoted by bold lowercase letters. The symbol $\bbC^{n \times m}$ (resp. $\bbR^{n \times m}$) denotes the set of all $n \times m$ complex (resp. real) matrices, where the identity matrix is given by $I_n \equiv [\be_1, \dots, \be_n] \in \bbR^{n \times n}$, with $\be_i$ being its $i$-th column. 
For a vector $\bx \in \bbC^n$, we define $\diag(\bx) = \diag(x_1, \dots, x_n)$ as the diagonal matrix formed from $\bx$, and $\|\bx\|_2$ as the vector $2$-norm.
Given $\bx, \by \in \bbC^n$ with $y_j \neq 0$ ($1 \leq j \leq n$), their element-wise division is  $
\bx ./ \by = \left[ x_1/y_1, \dots, x_n/y_n \right]^{\T}.
$ 
For a matrix $A \in \bbC^{m \times n}$,  $\operatorname{span}(A)$ denotes its column space, and $A^{\HH}$ (resp. $A^{\T}$) represents its conjugate transpose (resp. transpose).
Following MATLAB conventions,  $[A]_{i,j}$ refers to the entry of   $A$ located at the $i$-th row and $j$-th column.
Finally, we set
\begin{equation}\label{eq:basesP12}
\bbP_{n_1}={\rm span}(\psi_0(x),\dots,\psi_{n_1}(x)) ~{\rm and} ~\bbP_{n_2}={\rm span}(\phi_0(x),\dots,\phi_{n_2}(x)).
\end{equation}
A canonical choice is $\psi_j(x)=\phi_j(x)=x^{j-1}$.

 \section{Necessary optimality for local approximants}\label{sec:localOpt}

 \subsection{The Kolmogorov conditions: results from first-order analysis}

Building on the variational formulation in \eqref{eq:variation}, Kolmogorov-type optimality criteria arise from analyzing the first-order term with respect to $\lambda$. We first consider the polynomial minimax approximation case.
 \subsubsection{The polynomial case}\label{subsec:KolmP}
For polynomial minimax approximations (i.e., when $n_2 = 0$), Kolmogorov \cite{kolm:1948a} established a necessary and sufficient condition characterizing the unique minimax approximant $\hat{p} \in \mathbb{P}_{n_1}$. The analysis arises from considering perturbed polynomials of the form
$$p_{\lambda}(x) = \hat{p}(x) + \lambda s(x) \quad \forall s \in \mathbb{P}_{n_1}, \; \forall \lambda>0,$$
where $s$ is an arbitrary polynomial perturbation. The first-order variation of the squared error is then given by
$$\delta_{\lambda}(x) = |f(x) - p_{\lambda}(x)|^2 - |f(x) - \hat{p}(x)|^2 = -2\lambda \, \RE\left(\overline{(f(x) - \hat{p}(x))}s(x)\right) +  {O}(\lambda^2).$$
From this,   Kolmogorov \cite{kolm:1948a} obtained the primal form\footnote{Analysis with  $\lambda\in \bbR$ leads further to $$\max_{x\in {\cal E}(\hat{p})}\RE\left(\overline{(f(x) - \hat{p}(x))}s(x)\right)\ge 0\ge \min_{x\in {\cal E}(\hat{p})}\RE\left(\overline{(f(x) - \hat{p}(x))}s(x)\right), ~~\forall s\in \bbP_{n_1}.$$}    
\begin{equation}\label{eq:kolvP}
\max_{x\in {\cal E}(\hat{p})}\RE\left(\overline{(f(x) - \hat{p}(x))}s(x)\right)\ge 0, ~~\forall s\in \bbP_{n_1},
\end{equation}
where $ {\cal E}(\hat{p})$ is the set of extreme points defined in \eqref{eq:extremalsetY}. 

Condition \eqref{eq:kolvP} implies that  $\mathbf{0}\in {\rm conv}({\cal A})$ (cf. \cite[Theorem 2.3.2]{shap:1971} and  \cite[Lemma 1]{chlo:1964}), where $${\cal A}= \left\{\overline{(f(x) - \hat{p}(x))}\left[\begin{array}{c}\psi_0(x) \\\vdots \\\psi_{n_1}(x)\end{array}\right]\in \bbC^{n_1+1}:  x\in {\cal E}(\hat{p})\right\}.$$
The dual form of   Kolmogorov's condition relies on $\mathbf{0}\in {\rm conv}({\cal A})$ and the Carath\'eodory lemma:
\begin{lemma}[Carath\'eodory]\label{lem:cara}
If ${\cal F}\subset \bbR^n$,  then every point of the convex hull 
$
{\rm conv}({\cal F})=\left\{\sum_{j} \omega_j\by_j :\sum_{j} \omega_j=1,~\omega_j\ge 0,~ \by_j\in {\cal F}\right\}
$ can be written as a convex linear combination of at most $n +1$  points of ${\cal F}$. 
\end{lemma}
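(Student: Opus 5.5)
The plan is the classical induction argument that lowers the number of points in a convex representation one at a time. First take a point $\by\in{\rm conv}({\cal F})$. By the definition of the convex hull given in the statement, $\by$ is a finite convex combination
\[
\by=\sum_{j=1}^{k}\omega_j\by_j,\qquad \sum_{j=1}^k\omega_j=1,\quad \omega_j\ge 0,\quad \by_j\in{\cal F},
\]
for some finite $k$. Among all such representations of $\by$, fix one with $k$ minimal. We may then assume every $\omega_j>0$, since otherwise dropping a zero-weight term would shorten the representation. The goal is to show $k\le n+1$, so we argue by contradiction and suppose $k\ge n+2$.

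The key step is affine dependence. The $k-1\ge n+1$ vectors $\by_2-\by_1,\dots,\by_k-\by_1$ lie in $\bbR^n$, so they are linearly dependent. Hence there exist reals $\mu_1,\dots,\mu_k$, not all zero, with
\[
\sum_{j=1}^k\mu_j\by_j=\mathbf{0},\qquad \sum_{j=1}^k\mu_j=0 .
\]
To obtain these, take the coefficients of the dependence for $j\ge2$ and set $\mu_1=-\sum_{j\ge2}\mu_j$. Because the $\mu_j$ sum to zero and are not all zero, at least one $\mu_j$ is strictly positive.

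Next shift the weights along this dependence. For any $\tau\in\bbR$,
\[
\by=\sum_{j=1}^k(\omega_j-\tau\mu_j)\by_j,
\]
and the new coefficients still sum to $1$. Choose
\[
\tau=\min\{\omega_j/\mu_j:\ \mu_j>0\}>0 .
\]
With this choice every coefficient $\omega_j-\tau\mu_j$ is nonnegative, because indices with $\mu_j\le 0$ only increase and indices with $\mu_j>0$ stay $\ge 0$ by minimality of $\tau$. Moreover, the index attaining the minimum gets coefficient exactly zero. Dropping that term gives a convex representation of $\by$ with $k-1$ points of ${\cal F}$, which contradicts the minimality of $k$. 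Therefore $k\le n+1$.

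The only delicate point is choosing $\tau$ so that nonnegativity is preserved while at least one weight vanishes. This is handled by restricting the minimum to indices with $\mu_j>0$, which is nonempty thanks to $\sum_j\mu_j=0$. In the paper's setting the vectors live in $\bbC^{n_1+1}$, and there one applies the lemma after identifying $\bbC^{n_1+1}\cong\bbR^{2n_1+2}$, which gives at most $2n_1+3$ points.
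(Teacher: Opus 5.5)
Your argument is correct and complete. It is the standard affine-dependence reduction (equivalently, linear dependence of the lifted vectors $(\by_j,1)\in\bbR^{n+1}$), which is the usual proof of this result; the paper does not prove the lemma but simply quotes it from Shapiro's monograph (Lemma 2.3.1). Your closing remark about applying it via $\bbC^{n_1+1}\cong\bbR^{2n_1+2}$ to get at most $2n_1+3$ points also matches how the paper uses the lemma.
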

With the isomorphism $\bbC^{n_1+1}\cong \bbR^{2(n_1+1)}$, we have the Kolmogorov condition \cite{kolm:1948a} in the dual form (cf. \cite[Theorem 2.3.2]{shap:1971}): {\it $\hat p\in \bbP_{n_1}$ is the polynomial minimax  approximant of $ f\in {\bf C}({\cal B})$ if and only if there exist  $r$ ($1\le r\le 2n_1+3$) extreme points $z_1,\dots,z_r \in {\cal E}(\hat{p})$ and positive number $\omega_1,\dots,\omega_r$ with $\sum_{j=1}^r\omega_j=1$ such that\footnote{Condition \eqref{eq:kolvPD} implies that there is a non-null positive measure whose support consists of $r$ points of ${\cal E}(\hat p)$ such that $f(x) - \hat{p}(x)$ ``annihilates" $\bbP_{n_1}$ (\cite[Corollary 1]{rish:1961} and \cite[Chapter 2]{shap:1971}). Due to this fact,  condition \eqref{eq:kolvPD} (or   \eqref{eq:kolvP}) is also referred to as that $f(x) - \hat{p}(x)$ is orthogonal  to $\bbP_{n_1}$.} 
\begin{equation}\label{eq:kolvPD}
\sum_{j=1}^r \omega_j\overline{(f(z_j) - \hat{p}(z_j))}s(z_j)=0,~~\forall s\in \bbP_{n_1}.
\end{equation}
}

\subsubsection{The rational case}\label{subsec:KolmR}
Kolmogorov's conditions have been extended to  the nonlinear minimax approximations (see e.g., \cite{brae:1973,gutk:1983} and \cite[Theorem 86]{mein:1967}). For the rational minimax approximation, suppose $\hat \xi= {\hat p}/{\hat q}\in   \scrR_{(n_1,n_2)}$ is a (local) minimax irreducible rational approximant to $f\in {\bf C}({\cal B})$  with {defect} $\upsilon(\hat p,\hat q)$. Let   $n\triangleq n_1+n_2-\upsilon(\hat p,\hat q).$ In \cite[Theorem 3]{will:1979} (see also \cite[Theorem 1.2]{rutt:1985}), it analyzes the difference  
\begin{equation}\label{eq:varR1}
\delta_{\lambda}(x) =|f(x) -  {\xi}_{\lambda}(x)|^2 - |\hat e(x)|^2=[\lambda \kappa(x)+\lambda^2 \rho(x)]/|q_\lambda(x)|^2, 
\end{equation}
where $\hat e(x)=f(x)-\hat \xi(x)$, $  \xi_{\lambda}(x)=  \frac{p_{\lambda}(x)}{q_{\lambda}(x)}$, $p_{\lambda}(x)=\hat p(x)+
\lambda s(x),~  q_{\lambda}(x)=\hat q(x)+\lambda t(x)$, 
\begin{align} \label{eq:varRkrho}
 \kappa(x)&\triangleq {2 \RE\left(\overline{\hat e(x)\hat q(x)}g(x)/\hat q(x)\right)}, ~\rho(x)\triangleq { |f(x)t(x)-s(x)|^2-|f(x)t(x)-\hat \xi(x)t(x)|^2},\\\label{eq:varRg}
 g(x)&\triangleq t(x)\hat p(x)-s(x)\hat q(x)\in \bbP_{n}.
\end{align}
Note that any $g\in \bbP_{n}$ can be expressed in the form of \eqref{eq:varRg} with certain pair $(s,t)\in \bbP_{n_1}\times \bbP_{n_2}$.  
Since the first-order term $\kappa(x)$ dominates $\delta_{\lambda}(x)$ for sufficiently small $\lambda>0$, in a similar way to deriving \eqref{eq:kolvP} and its dual form \eqref{eq:kolvPD},  the following theorem  can be established, describing the primal \cite[Theorem 3]{will:1979} and dual optimality conditions (see  \cite{rutt:1985} and also  \cite[Theorems 1 and 2]{this:1993}).
 \begin{theorem}\label{thm:kolR}
 Suppose $\mathcal{B} \subset \mathbb{C}$ is a   compact set containing at least $N=n_1+n_2+2$ distinct points. Let $\hat \xi =\frac{\hat p}{\hat q}\in   \scrR_{(n_1,n_2)}$ be an  irreducible rational (local) minimax approximant to $f\in {\bf C}({\cal B})\setminus \scrR_{(n_1,n_2)}$  of \eqref{eq:bestf0} with {defect} $\upsilon(\hat p,\hat q)$ given in \eqref{eq:defect}. Let $n=n_1+n_2-\upsilon(\hat p,\hat q).$ Then\footnote{The proof of \cite[Theorem 1.2]{rutt:1985} remains valid for ${\cal B}$ being a finite discrete set   equipped with the discrete topology.}
 \begin{itemize}
 \item[i)] (primal form) 
 \begin{equation}\label{eq:kolvR}
\max_{x\in {\cal E}(\hat \xi)}\RE\left(\overline{(f(x) - {\hat \xi}(x))\hat q(x)}g(x)/\hat q(x)\right)\ge 0, ~~\forall g\in \bbP_{n};
\end{equation}
\item [ii)] (dual form) there exist  $r$ ($n+2\le r\le 2n+3$) extreme points $z_1,\dots,z_r \in {\cal E}(\hat \xi)$ and positive number $\omega_1,\dots,\omega_r$ with $\sum_{j=1}^r\omega_j=1$ such that 
\begin{equation}\label{eq:kolvRD}
\sum_{j=1}^r \omega_j\overline{(f(z_j) - {\hat \xi}(z_j))\hat q(x_j)}g(z_j)/\hat q(z_j)=0,~~\forall g\in \bbP_{n}.
\end{equation}
 \end{itemize}
 \end{theorem}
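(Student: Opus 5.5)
For part i) I will argue by contradiction from the variational identity \eqref{eq:varR1}. Suppose some $g\in\bbP_n$ makes the maximum in \eqref{eq:kolvR} strictly negative. Since $\hat p/\hat q$ is irreducible with defect $\upsilon(\hat p,\hat q)$, the map $(s,t)\mapsto t\hat p-s\hat q$ from $\bbP_{n_1}\times\bbP_{n_2}$ onto $\bbP_n$ is surjective; I will check this by a dimension count, with kernel the multiples $(c\hat p,c\hat q)$, $c\in\bbP_{\upsilon}$. So I pick $(s,t)$ realizing $g$.

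Then $\kappa(x)=2\RE\bigl(\overline{\hat e(x)\hat q(x)}g(x)/\hat q(x)\bigr)<0$ on ${\cal E}(\hat\xi)$. When ${\cal B}$ is a continuum, ${\cal E}(\hat\xi)$ is compact, so by continuity $\kappa\le-c<0$ on an open neighbourhood $U$ of ${\cal E}(\hat\xi)$. For finite ${\cal B}$ with the discrete topology, I simply take $U={\cal E}(\hat\xi)$. On $U$, for small $\lambda>0$, the $\lambda\kappa$ term dominates $\lambda^2\rho$, since $\rho$ is bounded and $|q_\lambda|$ is bounded below. Hence $|f-\xi_\lambda|<\|\hat e\|$ on $U$. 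On the compact set ${\cal B}\setminus U$ we have $|\hat e|\le\|\hat e\|-\epsilon$, and $\xi_\lambda\to\hat\xi$ uniformly there.

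The main obstacle is uniform convergence where $\hat q$ vanishes. Because $\hat\xi$ is a local best approximant with finite error, $\hat q$ has no zeros on ${\cal B}$, so $|q_\lambda|\ge c'>0$ for small $\lambda$ and uniform convergence holds. This gives $\|f-\xi_\lambda\|_{\infty,{\cal B}}<\|\hat e\|_{\infty,{\cal B}}$ for all small $\lambda$, with $\xi_\lambda\to\hat\xi$, contradicting local optimality.

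For part ii) I follow the polynomial route. Set
$${\cal A}=\Bigl\{\overline{\hat e(x)\hat q(x)}\,[\phi_0(x),\dots,\phi_n(x)]^{\T}/\hat q(x):x\in{\cal E}(\hat\xi)\Bigr\}\subset\bbC^{n+1}\cong\bbR^{2n+2},$$
which is compact. If $\mathbf 0\notin{\rm conv}({\cal A})$, a separating hyperplane yields a real linear functional, i.e.\ coefficients of some $g\in\bbP_n$, with $\RE\langle\cdot,g\rangle<0$ on ${\cal A}$. This contradicts i). Hence $\mathbf 0\in{\rm conv}({\cal A})$. Lemma \ref{lem:cara} in dimension $2n+2$ then gives at most $2n+3$ points with positive weights summing to one, and this is \eqref{eq:kolvRD}.

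For the lower bound $r\ge n+2$, suppose $r\le n+1$. Interpolation in $\bbP_n$ gives $g$ with $g(z_j)=\hat e(z_j)\hat q(z_j)^2$ for each $j$, because $\hat q(z_j)\neq0$. Plugging this $g$ into \eqref{eq:kolvRD} gives $\sum_j\omega_j|\hat e(z_j)|^2|\hat q(z_j)|^2$, up to the conjugation placement, which must vanish. But $|\hat e(z_j)|=\|\hat e\|>0$ since $f\notin\scrR_{(n_1,n_2)}$, so the sum is positive, a contradiction. Here the hypothesis that ${\cal B}$ has at least $N$ points ensures the extremal set can be large enough.
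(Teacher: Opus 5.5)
Your proposal is correct and follows essentially the route the paper indicates by deferring to Williams' and Ruttan's Theorem 1.2: a first-order variation in which $\kappa$ dominates $\delta_\lambda$ gives the primal form, and a separation argument plus Lemma~\ref{lem:cara} in $\bbR^{2n+2}$ gives the dual form. Your interpolation argument for the lower bound $r\ge n+2$, taking $g(z_j)=\hat e(z_j)\hat q(z_j)^2$, cleanly supplies a detail the paper only asserts in its remark after the theorem.
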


 \begin{remark} We have the following statements for Theorem \ref{thm:kolR}:
 \item[1)] Theorem \ref{thm:kolR} reduces to the Kolmogorov conditions \eqref{eq:kolvP} and \eqref{eq:kolvPD} for the polynomial case (i.e., $n_2=0$ and $q\equiv 1$).
 \item [2)]
 A corollary from the dual form of \eqref{eq:kolvRD} is that for any (local)  irreducible rational minimax approximant $\hat \xi$ to $f\in {\bf C}({\cal B})\setminus \scrR_{(n_1,n_2)}$, ${\cal E}(\hat \xi)$ contains at least $d\ge n+2$ extreme points. 
 \item[3)] When $\mathcal{B}$ is a real interval, the rational minimax approximant $\hat{\xi}$ can be characterized by a necessary and sufficient condition—the classical alternation theorem \cite{chlo:1964}. However, no such characterization is known for general compact sets $\mathcal{B} \subset \mathbb{C}$ (see \cite[Section 2]{gutk:1983}). Indeed, \eqref{eq:kolvR} is not a sufficient condition for the local rational minimax  approximant (cf. \cite[Example 1]{will:1979}).

  \item [4)] The dual formulations \eqref{eq:kolvPD} and \eqref{eq:kolvRD} guarantee the existence of $r$ positive weights $\omega_j$ and their corresponding extreme points $x_j$, but they are not conducive to practical computation. Fortunately, for the discrete case ($\mathcal{B} = \mathcal{X}$), recent dual-based numerical methods—proposed in \cite{yazz:2023,zhha:2025,zhyy:2025}—offer efficient algorithms to determine these values and thereby solve the original minimax approximation problem \eqref{eq:bestf0} through its max-min dual problem. Additional details will be discussed in Section~\ref{sec:dlawsonFound}.
 \end{remark}  
 
\subsection{Ruttan's necessary conditions: results from second-order analysis}\label{subsec:Ruttan2nd}
To continue the variational analysis on $\delta_\lambda(x)$ \eqref{eq:varR1}, Ruttan in \cite{rutt:1985} considered the term associated with $\lambda^2$. In particular, a new perturbation ${\xi}_{\lambda}(x) = \frac{p_{\lambda}(x)}{q_{\lambda}(x)}$ is proposed where 
\begin{equation}\label{eq:pqRuttan}
p_{\lambda}(x) = \hat{p}(x) + \lambda s(x) + \lambda^2 a(x)\in \bbP_{n_1}, \quad q_{\lambda}(x) = \hat{q}(x) + \lambda t(x) + \lambda^2 b(x)\in \bbP_{n_2},~\lambda>0,
\end{equation}
$s, a \in \mathbb{P}_{n_1}$ and $t, b \in \mathbb{P}_{n_2}$. The introduction of the extra polynomials $a$ and $b$ is motivated by the requirement that the $\lambda$-dependent term should be annihilated on $\mathcal{E}(\hat{\xi})$ by the pair $(s,t)$. Consequently, $a$ and $b$ act as free parameterized polynomials. With \eqref{eq:pqRuttan}, it follows that, for sufficiently small $\lambda\in \bbR$,
\begin{equation}\label{eq:varR2}
\delta_{\lambda}(x) =|f(x) -  {\xi}_{\lambda}(x)|^2 - |\hat e(x)|^2=[\lambda \kappa(x)+\lambda^2 (\rho(x)+ \alpha(x))+O(\lambda^3)]/|q_\lambda(x)|^2, 
\end{equation}
where $\kappa(x),\rho(x)$ are given by \eqref{eq:varRkrho} and 
\begin{equation}\label{eq:alpha}
\alpha(x)\triangleq {2 \RE\left(\overline{\hat e(x)\hat q(x)}c(x)/\hat q(x)\right)},~c(x)\triangleq b(x)\hat p(x)-a(x)\hat q(x)\in \bbP_n.
\end{equation}
To ensure the $\lambda^2$-term $\rho(x) + \alpha(x)$ (corresponding to $\varsigma(x)$ in \eqref{eq:variation}) dominates $\delta_\lambda(x)$ for sufficiently small $\lambda > 0$, Ruttan~\cite{rutt:1985} imposed constraints on the polynomial pair $(s,t)$ to eliminate the $\lambda$-term $\kappa(x)$ uniformly for all $x \in \mathcal{E}(\hat{\xi})$. Specifically, we consider perturbation pairs $(s,t) \in \mathbb{P}_{n_1} \times \mathbb{P}_{n_2}$ in
\begin{equation}\label{eq:stconstrant}
{\bbS}\triangleq\left\{(s,t)\in \bbP_{n_1}\times \bbP_{n_2}:  \kappa(x)=0,~\forall x\in {\cal E}(\hat \xi)\right\}.
\end{equation}
Based on this observation, Ruttan established both \cite[Theorem 1.3]{rutt:1985} and its dual formulation \cite[Theorem 1.4]{rutt:1985}. These results were then used to derive a   necessary and sufficient condition \cite[Theorem 2.2]{rutt:1985} characterizing the global minimax approximant for the case where $|\mathcal{E}(\hat{\xi})|=n+2$. However, \cite{this:1993} later demonstrated that \cite[Theorem 1.3]{rutt:1985} fails for arbitrary compact subsets $\mathcal{B} \subset \mathbb{C}$, even for the canonical case of the unit disc $\mathcal{D} \triangleq \{x \in \mathbb{C} : |x| \leq 1\}$ (cf. \cite[Example 5]{this:1993}).

Interestingly, our next analysis shows that Ruttan's theoretical framework in \cite{rutt:1985} remains valid for arbitrary finite discrete sets $\mathcal{B} \subset \mathbb{C}$. This validity provides crucial theoretical support for numerical approximation methods, particularly the recently developed {\tt d-Lawson} iteration \cite{zhha:2025,zhyy:2025} that operates on discrete node sets $\mathcal{X}=\{x_j\}_{j=1}^m$.

\begin{theorem}[Ruttan's primal formulation of second-order optimality conditions]\label{thm:RuttanP}
Suppose $\mathcal{B}={\cal X}=\{x_j\}_{j=1}^m$ containing $m\ge N=n_1+n_2+2$ distinct points.   Let $\hat \xi =\frac{\hat p}{\hat q}\in   \scrR_{(n_1,n_2)}$ be an irreducible  rational (local) minimax  approximant to $f\in {\bf C}({\cal B})\setminus \scrR_{(n_1,n_2)}$  of \eqref{eq:bestf0} with {defect} $\upsilon(\hat p,\hat q)$. Let $n=n_1+n_2-\upsilon(\hat p,\hat q).$ Then we have \eqref{eq:kolvR} and \eqref{eq:kolvRD}. Moroever,  for any $(s,t)\in {\bbS}$, it holds that 
\begin{equation}\label{eq:RuttanR2}
\left\{\min_{(a,b)\in \bbP_{n_1}\times \bbP_{n_2}}\max_{x\in {\cal E}(\hat \xi)}  (\rho(x) + \alpha(x))\right\}\ge 0,
\end{equation}
where ${\bbS}$ and  $\rho(x) + \alpha(x)$ are given in  \eqref{eq:stconstrant} and \eqref{eq:varR2}, respectively.
\end{theorem}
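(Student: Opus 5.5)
Since $\mathcal{X}$ is finite, the whole argument is a contradiction argument based on the expansion \eqref{eq:varR2}. The first-order statements \eqref{eq:kolvR} and \eqref{eq:kolvRD} are already given by Theorem \ref{thm:kolR}; the footnote there notes that Ruttan's proof holds for finite $\mathcal{X}$ with the discrete topology. It remains to prove \eqref{eq:RuttanR2}. Suppose it fails for some $(s,t)\in\bbS$. Then there exist $(a,b)\in\bbP_{n_1}\times\bbP_{n_2}$ and $\epsilon>0$ with $\rho(x)+\alpha(x)\le-\epsilon$ for every $x\in\mathcal{E}(\hat\xi)$. The goal is to show that the perturbation $\xi_\lambda=p_\lambda/q_\lambda$ of \eqref{eq:pqRuttan} strictly decreases the maximum error for all sufficiently small $\lambda>0$, contradicting local optimality.

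\textbf{Extreme points.} At each $x\in\mathcal{E}(\hat\xi)$ we have $\kappa(x)=0$ because $(s,t)\in\bbS$. Hence \eqref{eq:varR2} reduces to
$$\delta_\lambda(x)=\frac{\lambda^2(\rho(x)+\alpha(x))+O(\lambda^3)}{|q_\lambda(x)|^2}.$$
There are only finitely many points, so the $O(\lambda^3)$ constants are uniform. Also $\hat q(x)\neq0$ on $\mathcal{X}$, since $\hat\xi$ is finite there and irreducible, so $|q_\lambda(x)|^2\ge c>0$ for small $\lambda$. Therefore $\delta_\lambda(x)\le-\lambda^2\epsilon/(2C)<0$ on $\mathcal{E}(\hat\xi)$, where $C$ bounds $|q_\lambda|^2$. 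In other words $|f-\xi_\lambda|<\|\hat e\|_{\infty,\mathcal{X}}$ at every extreme point.

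\textbf{Non-extreme points.} Here discreteness is essential, and this is exactly where the continuum version breaks down, as shown in \cite{this:1993}. The set $\mathcal{X}\setminus\mathcal{E}(\hat\xi)$ is finite, so $|\hat e(x)|\le\|\hat e\|_{\infty,\mathcal{X}}-\gamma$ there for some $\gamma>0$. Moreover $\xi_\lambda(x)\to\hat\xi(x)$ as $\lambda\to0$ at each such point, again because $\hat q(x)\ne0$. So for small $\lambda$ these points still satisfy $|f-\xi_\lambda|<\|\hat e\|_{\infty,\mathcal{X}}$.

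\textbf{Conclusion.} Combining the two cases gives $\|f-\xi_\lambda\|_{\infty,\mathcal{X}}<\|\hat e\|_{\infty,\mathcal{X}}$ for all small $\lambda>0$. Since $\xi_\lambda\to\hat\xi$ pointwise on $\mathcal{X}$, which is the relevant notion of nearness for a local minimizer on a finite set, this contradicts local optimality.

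\textbf{Main obstacle.} The delicate step is justifying the expansion \eqref{eq:varR2} itself, in particular that the $\lambda^2$ coefficient is exactly $\rho+\alpha$. I would expand $f-p_\lambda/q_\lambda$ as
$$\hat e+\lambda\,\frac{g}{\hat q^2}+\lambda^2\,\frac{c\,\hat q-t\,g}{\hat q^3}+O(\lambda^3),$$
then form $|\cdot|^2$ and multiply by $|q_\lambda|^2$. The cross terms in $\lambda^2$ should recombine into $|ft-s|^2-|ft-\hat\xi t|^2$ plus $2\RE(\overline{\hat e\hat q}c/\hat q)$. Once this algebra is verified, the contradiction above goes through with constants that are uniform over the finite set.
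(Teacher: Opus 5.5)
Your proposal is correct and follows the paper's proof essentially step for step. It argues by contradiction with the perturbation \eqref{eq:pqRuttan}: $\kappa=0$ on $\mathcal{E}(\hat\xi)$ lets the negative $\lambda^2$-term force $|f-\xi_\lambda|<\|\hat e\|_{\infty,\mathcal{X}}$ at the extreme points, and a uniform gap at the finitely many non-extreme points handles the rest. Your bound $\delta_\lambda(x)\le-\lambda^2\epsilon/(2C)$ is the correctly scaled form of the paper's $\delta_\lambda(x)<-\epsilon_1/2$, and your check of the $\lambda^2$-coefficient (which the paper takes from Ruttan) is sound, though expanding $|fq_\lambda-p_\lambda|^2-|\hat e|^2|q_\lambda|^2$, an exact polynomial in $\lambda$, is quicker.
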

\begin{proof}
Following Ruttan's framework~\cite{rutt:1985} with necessary adaptations, we proceed by contradiction. Assume that there exist $(s,t)\in\bbS$ and $(a,b)\in\mathbb{P}_{n_1}\times\mathbb{P}_{n_2}$ satisfying
$$\left\{\max_{x\in\mathcal{E}(\hat{\xi})} \big(\rho(x) + \alpha(x)\big)\right\} < 0.$$
This  implies the existence of $\epsilon_1>0$ such that
$$\rho(x) + \alpha(x) < -\epsilon_1 \quad \forall x\in\mathcal{E}(\hat{\xi}).$$
For the parameterized perturbation $\xi_\lambda = p_\lambda/q_\lambda \in \mathscr{R}_{(n_1,n_2)}$ defined in~\eqref{eq:pqRuttan} and sufficiently small $\lambda\in\mathbb{R}$, we observe that $q_\lambda(x) \neq 0$ for all $x\in\mathcal{X}$, and  the error deviation satisfies
$$\delta_\lambda(x)=|f(x) -  {\xi}_{\lambda}(x)|^2 - |\hat e(x)|^2= \frac{\lambda^2(\rho(x) + \alpha(x)) +  {O}(\lambda^3)}{|q_\lambda(x)|^2} < -\frac{\epsilon_1}{2}, \quad \forall x\in\mathcal{E}(\hat{\xi}).$$
 As ${\cal X}$ contains finite points, when $\mathcal{E}(\hat{\xi}) \subsetneq \mathcal{X}$, there exists $\epsilon_2>0$ such that
$$\|\hat{e}\|_{\infty,\mathcal{X}} > |f(x) - \hat{\xi}(x)| + \epsilon_2 \quad \forall x\in\mathcal{X}\setminus\mathcal{E}(\hat{\xi}).$$
Consequently, for sufficiently small $\lambda$, we obtain 
$$|f(x) - \xi_\lambda(x)| < \|\hat{e}\|_{\infty,\mathcal{X}} \quad \forall x\in\mathcal{X},$$
contradicting $\hat{\xi}$ being a local minimax solution to~\eqref{eq:bestf0}.
\end{proof}
 
Analogously to the dual form of the Kolmogorov conditions,   \eqref{eq:RuttanR2} admits the following dual form \eqref{eq:RuttanRD2}  obtained by a linear programming (LP).  
To this end,   for $\mathcal{B}={\cal X}=\{x_j\}_{j=1}^m$,  let  $|{\cal E}(\hat \xi)|=d$ and
\begin{equation}\label{eq:ExtX}
{\cal E}(\hat \xi)=\{z_j\}_{j=1}^d\subseteq {\cal X}.
\end{equation}
As $\hat \xi$ is a local minimax approximant, by \eqref{eq:kolvRD}, we know that $d\ge n+2$.

\begin{theorem}[Ruttan's dual formulation of second-order optimality conditions]\label{thm:RuttanD}
Suppose $\mathcal{B}={\cal X}=\{x_j\}_{j=1}^m$ containing $m\ge N=n_1+n_2+2$ distinct points.   Let $\hat \xi =\frac{\hat p}{\hat q}\in   \scrR_{(n_1,n_2)}$ be an irreducible rational  (local) minimax approximant to $f\in {\bf C}({\cal B})\setminus \scrR_{(n_1,n_2)}$  of \eqref{eq:bestf0} with {defect} $\upsilon(\hat p,\hat q)$. Let $n=n_1+n_2-\upsilon(\hat p,\hat q)$ and ${\cal E}(\hat \xi)=\{z_j\}_{j=1}^d\subseteq {\cal X}.$  Assume $\{\pi_j \}_{j=1}^{n+1}$ is a basis of $\bbP_n$ and $\Pi_n\in \bbC^{d\times (n+1)}$ with $[\Pi_n]_{i,j}=\pi_{j}(z_i)$. Then we have \eqref{eq:kolvR} and \eqref{eq:kolvRD}.  Moreover,  for any $(s,t)\in {\bbS}$ given in \eqref{eq:stconstrant}, it holds that
  \begin{align} \label{eq:RuttanRD2} 
 \left\{\max_{\begin{subarray}{c}\boldsymbol\omega\in \bbR^d,~\boldsymbol\omega\ge 0 \\\boldsymbol{\omega}^{\T}\be=1,~
            E^{\T}   \boldsymbol\omega=\bzs,~\end{subarray}} \boldsymbol \omega^{\T}\boldsymbol\rho(\bz)\right\}\ge 0
\end{align}
where $\be=[1,\dots,1]^{\T}\in \bbR^d$, $\boldsymbol\rho(\bz)=[\rho(z_1),\dots,\rho(z_d)]^{\T}\in \bbR^d$ with $\rho(x)$ given  in  \eqref{eq:varRkrho},  $E=[\Pi^{\tt r},-\Pi^{\tt i}]\in \bbR^{d\times 2(n+1)}$ with 
$$\Pi=D\Pi_n=:\Pi^{\tt r}+{\tt i}\Pi^{\tt i}\in \bbC^{d\times (n+1)},~\Pi^{\tt r}, \Pi^{\tt i}\in \bbR^{d\times (n+1)},$$
  $D=\diag(\tau_1,\dots,\tau_d)$ and $\tau_j=2\overline{\hat e(z_j)\hat q(z_j)}/\hat q(z_j)$. 
\end{theorem}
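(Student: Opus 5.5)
The plan is to deduce Theorem \ref{thm:RuttanD} from Theorem \ref{thm:RuttanP} by rewriting the inner min--max in \eqref{eq:RuttanR2} as a linear program and passing to its LP dual. The conditions \eqref{eq:kolvR} and \eqref{eq:kolvRD} are inherited directly from Theorem \ref{thm:kolR} (equivalently Theorem \ref{thm:RuttanP}), so only \eqref{eq:RuttanRD2} needs proof.

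\textbf{Step 1: write $\alpha$ on ${\cal E}(\hat\xi)$ as a linear function of real coordinates.} As $(a,b)$ ranges over $\bbP_{n_1}\times\bbP_{n_2}$, the polynomial $c=b\hat p-a\hat q$ ranges over all of $\bbP_n$; this is the same surjectivity already noted after \eqref{eq:varRg}. Write $c=\sum_{k}\gamma_k\pi_k$ with $\boldsymbol\gamma=\boldsymbol\gamma^{\tt r}+{\tt i}\boldsymbol\gamma^{\tt i}\in\bbC^{n+1}$. Then
$$\alpha(z_j)=\RE\big(\tau_j c(z_j)\big)=\RE\big([\Pi\boldsymbol\gamma]_j\big)=[\Pi^{\tt r}\boldsymbol\gamma^{\tt r}-\Pi^{\tt i}\boldsymbol\gamma^{\tt i}]_j=[E\by]_j,\qquad \by=[\boldsymbol\gamma^{\tt r};\boldsymbol\gamma^{\tt i}]\in\bbR^{2(n+1)}.$$
Hence the vector $(\alpha(z_j))_{j=1}^d$ sweeps exactly ${\rm span}(E)$. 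For fixed $(s,t)\in\bbS$ the vector $\boldsymbol\rho(\bz)$ does not depend on $(a,b)$, so \eqref{eq:RuttanR2} becomes
$$\min_{\by\in\bbR^{2(n+1)}}\ \max_{1\le j\le d}\big(\rho(z_j)+[E\by]_j\big)\ \ge 0 .$$

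\textbf{Step 2: dualize.} The left side is the optimal value of the primal LP
$$\min_{\by,\ \mu}\ \mu\quad\text{s.t.}\quad \mu\be-E\by\ge\boldsymbol\rho(\bz).$$
This LP is always feasible: take $\by=0$ and $\mu$ large. Its Lagrangian dual, with multiplier $\boldsymbol\omega\ge0$, is
$$\max\ \boldsymbol\omega^{\T}\boldsymbol\rho(\bz)\quad\text{s.t.}\quad \boldsymbol\omega^{\T}\be=1,\ E^{\T}\boldsymbol\omega=\bzs,\ \boldsymbol\omega\ge0,$$
which is exactly the problem in \eqref{eq:RuttanRD2}. The constraints come from stationarity in $\mu$ (giving $\boldsymbol\omega^{\T}\be=1$) and in $\by$ (giving $E^{\T}\boldsymbol\omega=\bzs$).

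The dual is feasible by \eqref{eq:kolvRD}. Extend the Kolmogorov weights by zero to all of ${\cal E}(\hat\xi)$. Then $\boldsymbol\omega\ge0$ and $\boldsymbol\omega^{\T}\be=1$. Moreover $\sum_j\omega_j\tau_j g(z_j)=0$ for all $g\in\bbP_n$, i.e. $\boldsymbol\omega^{\T}\Pi=0$ as a complex row vector. Its real part gives $\boldsymbol\omega^{\T}\Pi^{\tt r}=0$ and its imaginary part gives $\boldsymbol\omega^{\T}\Pi^{\tt i}=0$, so $E^{\T}\boldsymbol\omega=\bzs$. The conjugation in \eqref{eq:kolvRD} is consistent with the definition of $\tau_j$.

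Since both primal and dual are feasible, LP strong duality \cite[Chapters 12 and 13]{nowr:2006} says both optima are attained and equal. Therefore the dual optimum equals the primal min--max, which is $\ge0$ by Theorem \ref{thm:RuttanP}.

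\textbf{Main obstacle.} The only delicate point is dual feasibility, since without it the dual value is $-\infty$ and equality fails. This is supplied by the Kolmogorov dual form, and it requires carefully matching the real/imaginary splitting $E=[\Pi^{\tt r},-\Pi^{\tt i}]$ and the factor $2$ in $\tau_j$ with $\kappa$ and $\alpha$. The remaining steps are routine bookkeeping.
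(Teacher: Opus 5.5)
Your proposal is correct and follows the paper's proof. Both write $(\alpha(z_j))_{j=1}^d = E\wtd\bc$ as $c$ ranges over $\bbP_n$, recast the min--max in \eqref{eq:RuttanR2} as an LP in $(\mu,\wtd\bc)$, and take its LP dual to obtain \eqref{eq:RuttanRD2}. Your explicit check that both LPs are feasible, with dual feasibility coming from the Kolmogorov weights in \eqref{eq:kolvRD}, lets LP strong duality (not just weak duality) carry the bound $\ge 0$ over to the dual; the paper leaves this step implicit.
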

\begin{proof}
A pair $(a,b) \in \mathbb{P}_{n_1} \times \mathbb{P}_{n_2}$ defines  $c(x) = b(x)\hat{p}(x) - a(x)\hat{q}(x)\in \mathbb{P}_n$, which can be uniquely represented in terms of the basis $\{\pi_j\}_{j=1}^{n+1}$ as
$c(x) = [\pi_1(x), \dots, \pi_{n+1}(x)]  \bc,$
where $\bc \in \mathbb{C}^{n+1}$ is the corresponding coefficient vector. Conversely, for any polynomial in $\mathbb{P}_n$,   $\exists(a,b) \in \mathbb{P}_{n_1} \times \mathbb{P}_{n_2}$ that expresses it in the form of  \eqref{eq:alpha}. Using the notations in this theorem and letting $\bc=\bc^{\tt r}+{\tt i} \bc^{\tt i}$, we have 
$$
[\alpha(z_1),\dots,\alpha(z_d)]^{\T}=\RE( D \Pi_n \bc)=E\left[\begin{array}{c}\bc^{\tt r} \\\bc^{\tt i} \end{array}\right]=:E\wtd \bc~~{\rm with}~\wtd \bc=\left[\begin{array}{c}\bc^{\tt r} \\\bc^{\tt i} \end{array}\right]\in \bbR^{2(n+1)}.
$$

Note that the minimization problem \eqref{eq:RuttanR2} can be written as  
\begin{equation}\label{eq:RuttanR2b}
\min_{\wtd\bc\in\bbR^{2(n+1)}}\max_{z_j\in {\cal E}(\hat \xi)}  (\rho(z_j) + \alpha(z_j)).
\end{equation}
Equivalently, introducing a real variable $\mu\in \bbR$ to bound $$\rho(z_j) + \alpha(z_j)\le \mu,~\forall j=1,\dots,d,$$ we can write \eqref{eq:RuttanR2b} as 
\begin{equation}\label{eq:RuttanR2c}
\min_{\begin{subarray}{c}\wtd\bc\in\bbR^{2(n+1)}\\\rho(z_j) + \alpha(z_j)\le \mu,~\forall j=1,\dots,d
\end{subarray}}\mu.
\end{equation}
With  $E, \boldsymbol\rho(\bz), \alpha(z_j)$ given in the theorem, we   can write $\rho(z_j) + \alpha(z_j)\le \mu,~\forall j=1,\dots,d$ as  
$\mu \be  -E\wtd \bc \ge\boldsymbol\rho(\bz)$. Apparently, \eqref{eq:RuttanR2c} is an LP for the variables $\mu,\wtd \bc$ whose dual LP is given (see e.g., \cite[Chapters 12 and 13]{nowr:2006})  by \eqref{eq:RuttanRD2}, where $\boldsymbol\omega\in \bbR^d$ contains the dual variables $\omega_j ~(j=1,\dots,d)$.
\end{proof}

As a corollary of Theorem \ref{thm:RuttanD}, the following result addresses the case $|{\cal E}(\hat \xi)|=d = n + 2$ and will be used to establish a sufficient and necessary condition for the global minimax approximant.

\begin{corollary}\label{cor:Ruttan2ndD}
Under the assumptions of Theorem \ref{thm:RuttanD}, if $d=n+2$, then there is a unique feasible $\boldsymbol \omega=[\omega_1,\dots,\omega_d]^{\T}>0$ for \eqref{eq:RuttanRD2}, and thus $\boldsymbol \omega^{\T}\boldsymbol\rho(\bz)\ge 0$ where $\boldsymbol\rho(\bz)$, associated with the $(s,t)\in {\bbS}$  in \eqref{eq:stconstrant}, is defined in Theorem \ref{thm:RuttanD}.  Moreover, this  $\boldsymbol \omega$ satisfies the dual form \eqref{eq:kolvRD}, i.e.,
\begin{equation}\label{eq:kolvRDd}
\sum_{j=1}^d \omega_j\overline{\hat e(z_j)\hat q(z_j)}g(z_j)/\hat q(z_j)=0,~~\forall g\in \bbP_{n}.
\end{equation}
\end{corollary}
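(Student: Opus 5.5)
The plan is to combine the dual Kolmogorov condition \eqref{eq:kolvRD} with a rank argument on the matrix $E$ when $d=n+2$.

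\textbf{Existence of a positive feasible point.} Theorem \ref{thm:kolR} ii) gives $r\ge n+2$ extreme points with positive weights satisfying \eqref{eq:kolvRD}. Since $|{\cal E}(\hat\xi)|=d=n+2$ and $r\le d$, we get $r=d$, so all $z_1,\dots,z_d$ are used, with weights $\omega_j>0$ and $\sum_j\omega_j=1$.

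Next I translate \eqref{eq:kolvRD} into $E^{\T}\boldsymbol\omega=\bzs$. Write $g=[\pi_1,\dots,\pi_{n+1}]\bc$ with $\bc\in\bbC^{n+1}$. Then \eqref{eq:kolvRD}, multiplied by $2$, reads $\boldsymbol\omega^{\T}D\Pi_n\bc=0$ for all $\bc$, i.e. $\boldsymbol\omega^{\T}\Pi=0$ as a complex row vector. Taking real and imaginary parts (using that $\boldsymbol\omega$ is real) gives $\boldsymbol\omega^{\T}\Pi^{\tt r}=0$ and $\boldsymbol\omega^{\T}\Pi^{\tt i}=0$, which is exactly $E^{\T}\boldsymbol\omega=\bzs$. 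The converse also holds, so for real $\boldsymbol\omega$, feasibility in \eqref{eq:RuttanRD2} is equivalent to \eqref{eq:kolvRDd}. This already yields the ``moreover'' part once uniqueness is established.

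\textbf{Uniqueness.} This is the key step. Any feasible $\boldsymbol\omega'$ satisfies $\boldsymbol\omega'^{\T}\Pi=0$, and then $\boldsymbol\omega'^{\T}D\Pi_n=0$. The matrix $\Pi_n$ is $(n+2)\times(n+1)$ with $n+2$ distinct nodes, so it is a Vandermonde-type matrix in the basis $\{\pi_j\}$ and has full column rank $n+1$. Its left null space is therefore one-dimensional, say spanned by $\bv$.

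The matrix $D$ is nonsingular, because each $\tau_j\neq0$: on extreme points $|\hat e(z_j)|=\|\hat e\|_\infty>0$ since $f\notin\scrR_{(n_1,n_2)}$, and $\hat q(z_j)\ne0$. Hence the left null space of $\Pi=D\Pi_n$ is also one-dimensional, spanned by $D^{-1}\bv$ (interpreting the condition as $D^{\T}\boldsymbol\omega'\in$ the left null space of $\Pi_n$). So every feasible $\boldsymbol\omega'$ is a complex multiple of a single fixed vector. The real vector $\boldsymbol\omega$ from the existence step is such a nonzero multiple, so $\boldsymbol\omega'=c\,\boldsymbol\omega$ with $c\in\bbC$. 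Reality of $\boldsymbol\omega'$ and $\boldsymbol\omega$, together with $\boldsymbol\omega\neq0$, forces $c\in\bbR$, and the normalization $\be^{\T}\boldsymbol\omega'=1=\be^{\T}\boldsymbol\omega$ forces $c=1$.

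\textbf{Conclusion.} The feasible set of \eqref{eq:RuttanRD2} is the singleton $\{\boldsymbol\omega\}$, and $\boldsymbol\omega>0$. The maximum in \eqref{eq:RuttanRD2} therefore equals $\boldsymbol\omega^{\T}\boldsymbol\rho(\bz)$, and Theorem \ref{thm:RuttanD} gives $\boldsymbol\omega^{\T}\boldsymbol\rho(\bz)\ge0$ for every $(s,t)\in\bbS$.

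The main obstacle is the uniqueness step, in particular justifying the full-rank and nonvanishing-$\tau_j$ claims. Once those hold, passing from real feasibility to complex null-space dimension is routine.
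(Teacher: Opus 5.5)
Your proposal is correct and follows the paper's argument. Nonsingularity of $D$ and full column rank of the $(n+2)\times(n+1)$ Vandermonde-type $\Pi_n$ make the left null space of $D\Pi_n$ one-dimensional, and the normalization then pins down $\boldsymbol\omega$. Your use of Theorem \ref{thm:kolR} ii) with $r=d$ to obtain existence and strict positivity, and your reduction of $E^{\T}\boldsymbol\omega=\bzs$ to $\boldsymbol\omega^{\T}\Pi=\bzs$ for real $\boldsymbol\omega$, spell out steps the paper's proof leaves implicit.
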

\begin{proof}
For any feasible $\boldsymbol \omega$ in \eqref{eq:RuttanRD2}, it holds that
$\boldsymbol \omega^{\T}D\Pi_n  =\bzs. $
Since $\Pi_n$ is the generalized Vandermonde matrix associated with the basis $\{\pi_j\}_{j=1}^{n+1}$ of $\mathbb{P}_n$, the condition $d = n+2$ combined with $\boldsymbol \omega^{\T}D\Pi_n  =\bzs$ implies \eqref{eq:kolvRDd}.
As $f \in   {\bf C}({\cal B})\setminus \scrR_{(n_1,n_2)}$, each term $\overline{\hat{e}(z_j)\hat{q}(z_j)} / \hat{q}(z_j)$ is nonzero, ensuring $D$ is nonsingular. Consequently, $\rank(D\Pi_n) = n+1$. The solution space of $\boldsymbol \omega^{\T}D\Pi_n  =\bzs$ is then one-dimensional and, under the   constraints $\boldsymbol \omega^{\T}\be=1$ and $\boldsymbol \omega\ge \bzs$, uniquely determined.
\end{proof}

\newpage
\subsection{A summary of local optimality conditions}\label{subsec:summary}
The following  summarizes the derivation of these optimality conditions for the (local) minimax approximant $\xi$, where $
\delta_{\lambda}(x) = |f(x) - \xi_{\lambda}(x)|^2 - |f(x) - \hat \xi(x)|^2$. 

\vskip2mm

$\bullet$  Kolmogorov's conditions for a general compact ${\cal B}$ (first-order analysis):

\begin{tikzpicture}[
    node distance=1cm,
    every node/.style={align=center}
]
\node (main) {polynomial case:  $\delta_{\lambda} \xlongequal[\text{}]{\text{$p_{\lambda}=\hat p+\lambda s$}}  {\lambda \kappa +O(\lambda^2)}$};
\node[right=0.6cm of main] (dominates) {$\kappa$ dominates $\delta_{\lambda}(x)$};
\draw[->, double, line width=0.5pt] (main) -- (dominates);
\node[below=0.95cm of main] (main2) {rational case: $\delta_{\lambda} \xlongequal[\text{$q_{\lambda}=\hat q+\lambda t$}]{\text{$p_{\lambda}=\hat p+\lambda s$}} \dfrac{\lambda \kappa +\lambda^2 \rho}{|q_\lambda|^2}$};
\node[right=0.9cm of main2] (dominates2) {$\kappa$ dominates $\delta_{\lambda}(x)$};
\draw[->, double, line width=0.5pt] (main2) -- (dominates2);
\node[below=0.4 cm of dominates, xshift=-0.05cm] (primal) {Kolmogorov (primal) condition};
\draw[->, double, line width=0.5pt] 
    ([yshift= 0.1cm]dominates.south) -- ([xshift=0.07cm, yshift=-0.1cm]primal.north)
    node[pos=0.5, right, font=\footnotesize] {};
\draw[->, double, line width=0.5pt] 
    (dominates2.north) ++(0.205cm,-0.16cm) -- ([xshift=0.1cm, yshift=0.1cm]primal.south)
    node[pos=0.5, right, font=\footnotesize] {};
    
\node[below=0.25cm of main, xshift=-2.4cm] (dual) {Kolmogorov (dual) condition};
\draw[->, double, line width=0.5pt] (primal) -- (dual) 
    node[pos=0.5, above, font=\footnotesize] {Carath\'eodory Lemma};
\end{tikzpicture}

\vskip 2mm
$\bullet$   Ruttan's necessary conditions for a finite discrete set ${\cal B}={\cal X}$ (second-order analysis):

\begin{tikzpicture}[
    node distance=1cm,
    every node/.style={align=center}
]
\node (main) {$\delta_{\lambda} \xlongequal[\text{$q_{\lambda}=\hat q+\lambda t+\lambda^2 b$}]{\text{$p_{\lambda}=\hat p+\lambda s+\lambda^2 a$}}  {\frac{\lambda \kappa(x)+\lambda^2(\rho(x) + \alpha(x)) +  {O}(\lambda^3)}{|q_\lambda(x)|^2}}$};
\node[right=0.5cm of main] (dominates) {choose $(s,t)$ to annihilate $\kappa$ on ${\cal E}(\hat\xi)$ \\so that $\rho(x) + \alpha(x)$ dominates $\delta_{\lambda}(x)$};
\draw[->, double, line width=0.5pt] (main) -- (dominates);
\node[below=0.4cm of dominates] (primal) {Ruttan    (primal) condition};
\draw[->, double, line width=0.5pt] (dominates) -- (primal);
\node[left=2.4cm of primal, xshift= -0.0cm] (dual) {Ruttan (dual) condition};
\draw[->, double, line width=0.5pt] 
    ([yshift=0cm]primal.west) -- ([xshift=2.1cm, yshift=-0.35cm]dual.north)
    node[pos=0.4, below, font=\footnotesize] {LP duality};
\end{tikzpicture}
\section{Sufficient optimality for global approximants}\label{sec:RuttanSuf}
In \cite{rutt:1985}, Ruttan  also introduced a sufficient condition for characterizing global minimax approximants. 
In Section \ref{subsec:ruttanglobal}, we present this condition and demonstrate its applicability to both the continuum and finite discrete sets ${\cal B}$. Furthermore, in Section \ref{subsec:ruttanglobalNec}, we establish that for finite discrete sets ($\mathcal{B}=\mathcal{X}$), this condition becomes necessary when  $\hat \xi$ attains the minimal number of extreme points, i.e., $|\mathcal{E}(\hat{\xi})| = n_1 + n_2 + 2 - \upsilon(\hat{p}, \hat{q})$.

\subsection{Ruttan's sufficient condition}\label{subsec:ruttanglobal}

Central to Ruttan's sufficient condition  is $\rho(x)$ defined in \eqref{eq:varRkrho}. Given an irreducible rational function $\hat{\xi} = \hat{p}/\hat{q} \in \mathscr{R}_{(n_1,n_2)}$ and its associated maximum error $\|\hat{e}\|_{\infty,\mathcal{B}}$, we introduce the auxiliary function
$$h(x; s, t) = \big| f(x)t(x) - s(x) \big|^2 - \big| t(x) \big|^2 \cdot \|\hat{e}\|_{\infty,\mathcal{B}}^2,$$
defined for a parameter pair $(s, t) \in \mathbb{P}_{n_1} \times \mathbb{P}_{n_2}$. It is noted that $h(x; s, t)=\rho(x)~\forall x\in {\cal E}(\hat\xi)$. 

With the polynomial bases of $\bbP_{n_1}$ and $\bbP_{n_2}$ in \eqref{eq:basesP12}, letting 
\begin{equation}\nonumber
s(x)=[\psi_0(x),\dots,\psi_{n_1}(x)]\bs,~\bs\in \bbC^{n_1+1}~t(x)=[\phi_0(x),\dots,\phi_{n_1}(x)]\bt,  ~\bt\in \bbC^{n_2+1},
\end{equation} 
it can be seen \cite{isth:1993} that  
\begin{equation}\nonumber
h(x; s, t)=\left[\begin{array}{c}\bs \\\bt\end{array}\right]^{\HH}H(x)\left[\begin{array}{c}\bs \\\bt\end{array}\right],
\end{equation}
where $H:\bbC\rightarrow \bbC^{N\times N}$ is a rank-one matrix-valued function  with $N=n_1+n_2+2$ given by
\begin{equation} \label{eq:Rutten-rank1}
H(x)=\left[\begin{array}{cc}H_1(x) & H_3(x)  \\H_3^{\HH}(x)  &H_2(x) \end{array}\right]\in \bbC^{N\times N}
\end{equation}
and 
\begin{subequations}\label{eq:Hk}
\begin{align}\label{eq:H1}
[H_1(x)]_{\ell,k}&=\overline{\psi_{\ell-1}(x)}\psi_{k-1}(x)\in \bbC,~~1\le \ell,k\le n_1+1,\\\label{eq:H2}
[H_2(x)]_{\ell,k}&=\left(|f(x)|^2- \|\hat e \|_{\infty,{\cal B}}^2\right)\overline{\phi_{\ell-1}(x)}\phi_{k-1}(x)\in \bbC,~~1\le \ell,k\le n_2+1,\\\label{eq:H3}
[H_3(x)]_{\ell,k}&=-f(x)\overline{\psi_{\ell-1}(x)}\phi_{k-1}(x)\in \bbC,~~1\le \ell\le n_1+1,~~1\le k\le n_2+1.
\end{align}
\end{subequations}
One can verify that if $\{\wtd \psi_j\}_{j=0}^{n_1}$ and $\{\wtd \phi_j\}_{j=0}^{n_2}$ are new bases for $\bbP_{n_1}$ and $\bbP_{n_2}$, respectively, with the transformation matrices $T_1\in \bbC^{(n_1+1)\times (n_1+1)}$ and $T_2\in \bbC^{(n_2+1)\times (n_2+1)}$ satisfying 
\begin{equation}\label{eq:transT12}
[\psi_0(x),\dots,\psi_{n_1}(x)]=[\wtd\psi_0(x),\dots,\wtd\psi_{n_1}(x)]T_1,~[\phi_0(x),\dots,\phi_{n_2}(x)]=[\wtd\phi_0(x),\dots,\wtd\phi_{n_2}(x)]T_2,
\end{equation}
then the new rank-one matrix $\wtd H(x)$ in \eqref{eq:Rutten-rank1} associated with the $\{\wtd \psi_j\}_{j=0}^{n_1}$ and $\{\wtd \phi_j\}_{j=0}^{n_2}$  is
$$
\wtd H(x)=\left[\begin{array}{cc}T_1^{\HH}  &  \\   &T_2^{\HH} \end{array}\right]H(x)\left[\begin{array}{cc}T_1  &  \\   &T_2 \end{array}\right].
$$
This implies that the definiteness of $H(x)$ is invariant under the choice of bases for $\bbP_{n_1}$ and $\bbP_{n_2}$.

Another form   in Ruttan \cite{rutt:1985} to express $h(x;s,t)$ is to use a basis $\{(\theta_j,\vartheta_j)\}_{j=1}^N$ for the product subspace $\bbP_{n_1}\times \bbP_{n_2}$. If  $\bbP_{n_1}$ and $\bbP_{n_2}$ have bases in \eqref{eq:basesP12}, one choice of $\{(\theta_j,\vartheta_j)\}_{j=1}^N$ can be
$
\left[(\psi_0,0), \dots,(\psi_{n_1},0), (0, \phi_0), \dots,(0,\phi_{n_2}) \right].
$
Using $\{(\theta_j,\vartheta_j)\}_{j=1}^N$, a pair $(s,t)\in \bbP_{n_1}\times \bbP_{n_2}$ can be expressed by 
\begin{equation}\nonumber
s(x)=[\theta_1(x),\dots, \theta_N(x)]\bv, ~t(x)=[\vartheta_1(x),\dots, \vartheta_N(x)]\bv,~\bv\in \bbC^{N},
\end{equation}
and the function $h(x;s,t)$ can be given by 
$
h(x;s,t)=\bv^{\HH}G(x)\bv,
$
where $G:\bbC\rightarrow \bbC^{N\times N}$ is a rank-one matrix-valued function with the entry 
\begin{equation}\label{eq:G}
[G(x)]_{\ell,k}=\overline{(f(x)\vartheta_\ell(x)-\theta_\ell(x))}(f(x)\vartheta_\ell(x)-\theta_\ell(x))-\|\hat e \|_{\infty,{\cal B}}^2 \cdot\overline{\vartheta_\ell(x)}\vartheta_\ell(x).
\end{equation}
Analogously, if both $\{(\theta_j,  \vartheta_j)\}_{j=1}^{N}$  and $\{(\wtd \theta_j,\wtd \vartheta_j)\}_{j=1}^{N}$ are bases of $\bbP_{n_1}\times \bbP_{n_2}$ with the transformation $T \in \bbC^{N\times N}$ similar to \eqref{eq:transT12}, then the associated rank-one matrix $\wtd G(x)=T^{\HH}G(x) T$. This implies that the definiteness of $G(x)$ is invariant under the choice of basis for $\bbP_{n_1} \times \bbP_{n_2}$.

We now present  Ruttan's sufficient condition \cite[{Theorem} 2.1]{rutt:1985} for a global minimax approximant $\hat \xi =\hat p /\hat q$.

\begin{theorem}{\rm (\cite[{Theorem} 2.1]{rutt:1985})}\label{thm:RuttanSufOpt}
Suppose $\mathcal{B} \subset \mathbb{C}$ is a  compact set containing at least $N=n_1+n_2+2$ distinct points and $\hat\xi=\hat p /\hat q \in  \scrR_{(n_1,n_2)}$ is an irreducible rational {approximant} of $f\in {\bf C}({\cal B})\setminus \scrR_{(n_1,n_2)}$. If there exist points $\{z_j\}_{j=1}^r\subseteq {\cal E}(\hat\xi),~r\ge 1$ and positive real constants $\{\omega_j\}_{j=1}^r$ such that $\sum_{j=1}^r \omega_j=1$ and the Hermitian matrix\footnote{Condition \eqref{eq:RuttanHc} that the Hermitian matrix $H$ is positive semi-definite can equivalently be  $G=\sum_{j=1}^r \omega_j G(z_j)\succeq 0$, where $G(x)$ is defined in \eqref{eq:G}.}
\begin{equation}\label{eq:RuttanHc}
H =\sum_{j=1}^r \omega_j H(z_j)\succeq 0,
\end{equation}
where $H(x)$ is given by \eqref{eq:Rutten-rank1}, then  $\hat\xi$ is a global minimax approximant for \eqref{eq:bestf0}. 
In that case $\{z_j\}_{j=1}^r$ and $\{\omega_j\}_{j=1}^r$ satisfy the local (dual form) Kolmogorov condition \eqref{eq:kolvRD}.
\end{theorem}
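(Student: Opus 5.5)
The plan is to argue by contradiction and test the quadratic form $H$ on the coefficient vector of a hypothetically better approximant. Suppose some $\xi=p/q\in\scrR_{(n_1,n_2)}$ satisfies $\|f-\xi\|_{\infty,\mathcal{B}}<\|\hat e\|_{\infty,\mathcal{B}}$. Taking $(s,t)=(p,q)$ in the auxiliary function gives, at every point where $q(x)\neq0$,
\[
h(x;p,q)=|f(x)q(x)-p(x)|^2-|q(x)|^2\|\hat e\|_{\infty,\mathcal{B}}^2=|q(x)|^2\bigl(|f(x)-\xi(x)|^2-\|\hat e\|_{\infty,\mathcal{B}}^2\bigr).
\]
The right-hand side is strictly negative wherever $q(x)\neq 0$. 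At a zero of $q$ we have $h(x;p,q)=|p(x)|^2\ge 0$, so these points need separate attention. If $\xi$ is written in lowest terms, a zero $x\in\mathcal{B}$ of $q$ would be a pole of $\xi$, which contradicts $|f-\xi|$ being bounded on $\mathcal{B}$. Hence, once $p/q$ is taken irreducible, $q$ has no zeros on $\mathcal{B}$ and $h(z_j;p,q)<0$ for every $j$.

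Next, write $[\bs;\bt]$ for the coefficient vector of $(p,q)$ in the bases \eqref{eq:basesP12}; it is nonzero since $q\not\equiv0$. Then
\[
[\bs;\bt]^{\HH}H[\bs;\bt]=\sum_{j=1}^r\omega_j h(z_j;p,q)<0,
\]
because all $\omega_j>0$ and $r\ge1$. This contradicts \eqref{eq:RuttanHc}. Therefore no $\xi\in\scrR_{(n_1,n_2)}$ does strictly better, and $\hat\xi$ is global. The argument never uses the topology of $\mathcal{B}$, so it holds equally for continua and for finite sets $\mathcal{X}$.

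For the second claim, test $H$ at $[\hat\bs;\hat\bt]$, the coefficients of $(\hat p,\hat q)$. Since each $z_j\in\mathcal{E}(\hat\xi)$ and $\hat q(z_j)\neq 0$ (again by irreducibility and boundedness of $\hat e$), $h(z_j;\hat p,\hat q)=|\hat q(z_j)|^2(|\hat e(z_j)|^2-\|\hat e\|^2_{\infty,\mathcal{B}})=0$. So the quadratic form of the positive semidefinite matrix $H$ vanishes at $\bv_0=[\hat\bs;\hat\bt]$, which forces $H\bv_0=\bzs$. Computing $\bv^{\HH}H\bv_0$ for an arbitrary $\bv=[\bs;\bt]$ by polarization of $h$ gives
\[
\bv^{\HH}H\bv_0=\sum_j\omega_j\Bigl[\overline{(f t-s)}(z_j)\,(f\hat q-\hat p)(z_j)-\|\hat e\|^2_{\infty,\mathcal{B}}\,\overline{t(z_j)}\,\hat q(z_j)\Bigr].
\]
Substitute $f\hat q-\hat p=\hat e\hat q$ and $\|\hat e\|_{\infty,\mathcal{B}}^2=|\hat e(z_j)|^2$ on $\mathcal{E}(\hat\xi)$. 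Each bracket then equals $\hat e(z_j)\hat q(z_j)\,\overline{(f t-s)(z_j)-\hat e(z_j)t(z_j)}=\hat e(z_j)\hat q(z_j)\,\overline{(\hat\xi t-s)(z_j)}$. Writing $\hat\xi t-s=(t\hat p-s\hat q)/\hat q=g/\hat q$, the identity $\bv^{\HH}H\bv_0=0$ becomes
\[
\sum_j\omega_j\,\hat e(z_j)\hat q(z_j)\,\overline{g(z_j)/\hat q(z_j)}=0 .
\]
Conjugating yields \eqref{eq:kolvRD}, and every $g\in\bbP_n$ arises this way from some $(s,t)$, as noted after \eqref{eq:varRg}.

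The main obstacle is this last polarization computation, together with checking that the sesquilinear form induced by \eqref{eq:Hk} gives exactly this cross term with the right conjugation conventions. The zero-denominator issue in the first part is handled cleanly by irreducibility, so I expect the bookkeeping of $H\bv_0=\bzs$ to be the only delicate step.
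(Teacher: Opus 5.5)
Your proof is correct.

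\textbf{Global optimality.} This part is essentially the paper's argument. You test the quadratic form of $H$ on the coefficient vector of a strictly better approximant written in lowest terms, so its denominator does not vanish on $\mathcal{B}$. This gives $\sum_j\omega_j|q(z_j)|^2\bigl(|f(z_j)-\xi(z_j)|^2-\|\hat e\|_{\infty,\mathcal{B}}^2\bigr)<0$, contradicting $H\succeq 0$. Your phrasing ``no $\xi$ does strictly better'' handles attainability of $\eta_{\mathcal{B}}$ directly. The paper instead splits into the attainable and unattainable cases; the logic is the same.

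\textbf{Kolmogorov condition.} Here the paper gives no argument and simply refers to Ruttan's original proof. You supply a self-contained one:
\begin{enumerate}[(i)]
\item Since $h(z_j;\hat p,\hat q)=0$ on $\mathcal{E}(\hat\xi)$, the vector $\bv_0=[\hat\bs;\hat\bt]$ has zero quadratic form, so it is a null vector of the positive semidefinite $H$.
\item Hence $\bv^{\HH}H\bv_0=0$ for every $\bv$.
\item Combined with the surjectivity of $(s,t)\mapsto t\hat p-s\hat q$ onto $\bbP_n$, this yields \eqref{eq:kolvRD}.
\end{enumerate}
I checked the sesquilinear form induced by \eqref{eq:Hk}. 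At each $z_j$ it equals $\overline{(ft-s)}(f\hat q-\hat p)-\|\hat e\|_{\infty,\mathcal{B}}^2\,\overline{t}\,\hat q$. Your conjugation conventions and the reduction to $\hat e\hat q\,\overline{g/\hat q}$ are right.
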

The proof of \cite[Theorem 2.1]{rutt:1985} implicitly assumes that \eqref{eq:bestf0} admits a solution—that is, the infimum $\eta_{\mathcal{B}}$ is attainable. However, condition \eqref{eq:RuttanHc} inherently ensures the existence of a minimax approximant, which renders the result applicable even when $\mathcal{B} = \mathcal{X}$ is a finite discrete set. To see this,  suppose, by contrast, that $\hat\xi$ is not a global solution to \eqref{eq:bestf0}. Then either $\eta_{\mathcal{B}}$ is not attainable or  is attainable but $\hat\xi$ is not a solution. In either case, it must hold that $\|\hat e\|_{\infty,{\cal B}}>\eta_{\mathcal{B}}$, and we can find an irreducible $\wtd \xi(x)=\wtd p(x)/\wtd q(x)\in \scrR_{(n_1,n_2)}$ with $\wtd q(x)\ne 0~(x\in {\cal B})$ satisfying 
 \begin{equation}\label{eq:etawtd}
 \eta_{\mathcal{B}}\le \|\wtd e\|_{\infty, {\cal B}}=\max_{x\in {\cal B}} |f(x)-\wtd \xi(x)|<\|\hat e\|_{\infty,{\cal B}}.
 \end{equation}
 Indeed, if the infimum of \eqref{eq:bestf0} is unattainable and $\|\hat e\|_{\infty,{\cal B}}>\eta_{\cal B}$, then by the definition of infimum \eqref{eq:bestf0},  we can find such a $\wtd \xi$ satisfying  \eqref{eq:etawtd}; otherwise, if   \eqref{eq:bestf0} is attainable but $\hat\xi$ is not a global solution, then simply take $\wtd \xi$ as any global minimax approximation. 
 
 Let $\wtd \bs\in \bbC^{n_1+1}$ and $\wtd \bt\in \bbC^{n_2+1}$ be the associated coefficient vectors of $ \wtd p(x) $ and $\wtd q(x)$ in the basis of \eqref{eq:basesP12}. By \eqref{eq:RuttanHc}, it follows that
 \begin{align*}
 0&\le \sum_{j=1}^r \omega_j\left[\begin{array}{c}\wtd\bs \\\wtd\bt\end{array}\right]^{\HH}H(z_j)\left[\begin{array}{c}\wtd\bs \\\wtd\bt\end{array}\right]=\sum_{j=1}^r \omega_j h(z_j; \wtd p,\wtd q)\\
 &=\sum_{j=1}^r \omega_j\left(\big| f(z_jx)\wtd q(z_j) - \wtd p(z_j) \big|^2 - \big| \wtd q(z_j) \big|^2 \cdot \|\hat{e}\|_{\infty,\mathcal{B}}^2\right)\\
 &=\sum_{j=1}^r \frac{\omega_j}{\big| \wtd q(z_j) \big|^2}\left(\big| f(z_j)  - \wtd \xi(z_j) \big|^2 - \|\hat{e}\|_{\infty,\mathcal{B}}^2\right).
 \end{align*}
 As $r\ge 1$ and $\omega_j>0$, there is a $j$ so that $\big| f(z_j)  - \wtd \xi(z_j) \big| \ge  \|\hat{e}\|_{\infty,\mathcal{B}}$,  and therefore, 
 $$
\|\wtd e\|_{\infty, {\cal B}}=\max_{x \in {\cal B}} \|f(x)-\wtd \xi(x)\| \ge\big| f(z_j)  - \wtd \xi(z_j) \big| \ge \|\hat  e\|_{\infty, {\cal B}},
 $$
 which contradicts    \eqref{eq:etawtd}. This proves that $\hat \xi$ achieves the infimum in  \eqref{eq:bestf0} and thus is a global minimax approximant. The rest part of the proof follows \cite[Theorem 2.1]{rutt:1985}.

Ruttan's characterization of the global approximant has limited practical utility for three key reasons:
(i) it requires a priori computation of an irreducible $\hat{\xi}$ to verify \eqref{eq:RuttanHc};
(ii) it necessitates identifying a subset $\{z_j\}_{j=1}^r \subseteq  {\cal E}(\hat \xi)$ along with associated positive weights $\{\omega_j\}_{j=1}^r$; and
(iii) it imposes the condition that the matrix $H$ \eqref{eq:RuttanHc} be positive semi-definite. Fortunately, for finite discrete sets $\mathcal{B}$, this characterization can be efficiently verified numerically in a recently developed method, the {\tt d-Lawson} iteration proposed in \cite{zhyy:2025,zhha:2025}. We will explore this connection in detail in Section \ref{sec:dlawsonFound}.
 
\subsection{When Ruttan's sufficient condition becomes necessary}\label{subsec:ruttanglobalNec}

In \cite{rutt:1985}, for any compact set ${\cal B} \subseteq \bbC$ and $f \in {\bf C}({\cal B})$, Ruttan asserted that the sufficient condition in Theorem \ref{thm:RuttanSufOpt} is also necessary when   $|{\cal E}(\hat\xi)|=n+2$.   By Theorem \ref{thm:kolR}, we know that $n+2$ is the minimal cardinality of $\mathcal{E}(\hat{\xi})$ for $\hat\xi$ to be a local minimax approximant. However, \cite{this:1993} later showed that this claim does not hold for arbitrary compact subsets $\mathcal{B} \subset \mathbb{C}$. Notably, \cite{this:1993} identified a special case—where $\mathcal{B} = [a,b] \subseteq \bbR$—and proved that when $|{\cal E}(\hat\xi)|=n+2$, Ruttan's sufficient condition becomes necessary for approximating  $f \in {\bf C}([a,b])$ over real normal rational functions (\cite[Theorem 5]{this:1993}). The key contribution of this subsection is generalizing this necessity result to arbitrary finite discrete sets ${\cal B} \subset \bbC$.
   
\begin{theorem}\label{thm:Ruttansufnec}
Suppose $\mathcal{B}={\cal X}=\{x_j\}_{j=1}^m$ containing $m\ge N=n_1+n_2+2$ distinct points and $f\in {\bf C}({\cal B})\setminus \scrR_{(n_1,n_2)}$. Let  $\hat\xi=\hat p/\hat q\in   \scrR_{(n_1,n_2)}$ be   irreducible and  ${\cal E}(\hat\xi) =\{z_j\}_{j=1}^d$ with $d=n_1+n_2-\upsilon(\hat p,\hat q)+2$. Then $\hat\xi$ is a global minimax approximant of \eqref{eq:bestf0} if and only if  there exist $d$    positive real constants $\{\omega_j\}_{j=1}^d$ such that\footnote{Equivalently, the condition $H =\sum_{j=1}^d \omega_j H(z_j)\succeq 0$ can be  $G=\sum_{j=1}^r \omega_j G(z_j)\succeq 0$, where $G(x)$ is defined in \eqref{eq:G}.} $\sum_{j=1}^d \omega_j=1$ and  $H =\sum_{j=1}^d \omega_j H(z_j)\succeq 0$, where $H(x)$ is given in \eqref{eq:Rutten-rank1}. 
\end{theorem}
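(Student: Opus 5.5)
The "if" direction is Theorem \ref{thm:RuttanSufOpt} with $r=d$; as noted after that theorem, condition \eqref{eq:RuttanHc} forces attainment of the infimum even for a finite ${\cal X}$. So the work is in the "only if" direction. I will take for $\boldsymbol\omega$ the unique positive vector from Corollary \ref{cor:Ruttan2ndD}. Since a global minimax approximant is also local, Theorems \ref{thm:RuttanP} and \ref{thm:RuttanD} apply. Because $d=n+2$, the feasible set of \eqref{eq:RuttanRD2} is the single point $\boldsymbol\omega>0$, so the dual condition reads $\sum_j\omega_j\rho(z_j)\ge 0$ for every $(s,t)\in\bbS$. Recall $\rho(z_j)=h(z_j;s,t)=\bv^{\HH}H(z_j)\bv$. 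This gives
\[
\bv^{\HH}H\bv\ge 0 \quad \text{for all } \bv=[\bs;\bt] \text{ with } (s,t)\in\bbS,\qquad H=\sum_{j=1}^d\omega_jH(z_j).
\]
It remains to extend nonnegativity from $\bbS$ to all of $\bbC^N$.

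For the extension I will decompose an arbitrary $(s,t)$ as $(s,t)=(s_0,t_0)+\mu(\hat p,\hat q)+(s_1,t_1)$ and compute the quadratic form of $H$ on each piece. The first observation is that $(\hat p,\hat q)$ lies in the kernel of each $H(z_j)$. Indeed, $H(z_j)$ is rank one with $H(z_j)\bv$ proportional to a vector built from $f(z_j)t(z_j)-s(z_j)$ and $t(z_j)$. Moreover $h(z_j;\hat p,\hat q)=|\hat q(z_j)|^2(|\hat e(z_j)|^2-\|\hat e\|^2_\infty)=0$ on extreme points. A direct expansion of the sesquilinear form $\bv^{\HH}H(z_j)\bw$ with $\bw\leftrightarrow(\hat p,\hat q)$ gives $\overline{(ft-s)}\hat q\,\hat e-\|\hat e\|^2\bar t\hat q$, evaluated at $z_j$. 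Using $|\hat e(z_j)|=\|\hat e\|_\infty$, this equals $-\tfrac12\overline{\kappa}$-type terms, i.e., a multiple of $\overline{\hat e(z_j)\hat q(z_j)}g(z_j)/\hat q(z_j)$ with $g=t\hat p-s\hat q$. After weighting by $\omega_j$ and summing, this cross term vanishes for all $(s,t)$, by \eqref{eq:kolvRDd}. Hence $H\bw=\bzs$, and the $\mu$-component can be discarded.

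Next I identify $\bbS$ modulo this kernel. The map $(s,t)\mapsto g=t\hat p-s\hat q$ is onto $\bbP_n$. Its kernel is exactly the pairs proportional to $(\hat p,\hat q)$ together with the defect directions; irreducibility gives this, and a dimension count gives $N-(n+1)=\upsilon+1$. The constraints $\kappa(z_j)=0$ for $j=1,\dots,d$ say that $\RE(D\Pi_n\bc)=0$, where $\bc$ is the coefficient vector of $g$. Since $D\Pi_n$ has full column rank $n+1$ and only one real relation $\boldsymbol\omega^{\T}$ among its rows, the real-linear map $\bc\mapsto\RE(D\Pi_n\bc)$ goes from $\bbR^{2(n+1)}$ into the hyperplane $\boldsymbol\omega^\perp\subset\bbR^{n+2}$. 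Its kernel, which is $\bbS$ modulo $\ker$, therefore has real dimension at least $2(n+1)-(n+1)=n+1$.

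The main obstacle is then to show that the quadratic form is nonnegative in the directions transverse to $\bbS$. I will try to show that the cross terms of the form between $\bbS$ and a complementary real subspace $V$ vanish. Along $V$, the vector $(\kappa(z_j))_j$ ranges over $\boldsymbol\omega^\perp$. The key algebraic fact I will aim to prove, generalizing the computation above, is
\[
\textstyle\sum_j\omega_j\bv^{\HH}H(z_j)\bv=\sum_j\omega_j\rho(z_j),
\]
together with the statement that the $\kappa$-parts enter only through $\sum_j\omega_j\kappa(z_j)\cdot(\cdot)$. Concretely, I would write $\rho=|ft-s|^2-\|\hat e\|^2|t|^2$ at $z_j$ as $|t\hat e-g/\hat q|^2-|t|^2|\hat e|^2$, which equals $|g/\hat q|^2-\RE(2\overline{t\hat e}g/\hat q)$. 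The $|g/\hat q|^2$ term is manifestly nonnegative. The remaining term is linear in $g$ with a coefficient involving $t$; it must be controlled via \eqref{eq:kolvRDd} applied to suitable products $g$, and via the freedom to shift $(s,t)$ by $(\hat p,\hat q)$-multiples. If the vanishing of the cross terms fails, the fallback is to replace $(s,t)$ by a second-order correction using the free pair $(a,b)$ from \eqref{eq:pqRuttan}, i.e., to use the $\min_{(a,b)}$ in \eqref{eq:RuttanR2} again. Once nonnegativity holds on all of $\bbC^N$, we get $H\succeq0$, completing the proof.
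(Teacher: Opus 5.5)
\textbf{There is a genuine gap.} Your setup matches the paper's. With $d=n+2$ the dual LP has the single feasible point $\boldsymbol\omega>0$, so $Q(\bv)=\bv^{\HH}H\bv=\sum_j\omega_j\rho(z_j)\ge 0$ on the real subspace $\bbS$. Your check that $(\hat p,\hat q)$ is a null vector of $H$ via \eqref{eq:kolvRDd} is also correct. The gap is the decisive step, extending $Q\ge0$ from $\bbS$ to all of $\bbC^N$. You leave it as something you ``will try'' to show, and the tools you name cannot do it.

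Since $ft-s=t\hat e+g/\hat q$ (so the cross term enters with a plus sign),
$\rho(z_j)=|g(z_j)/\hat q(z_j)|^2+2\RE\bigl(\overline{(t/\hat q)(z_j)}\,\sigma_j(g)\bigr)$, where $\sigma_j(g)=\overline{\hat e(z_j)\hat q(z_j)}g(z_j)/\hat q(z_j)$. The second term is sesquilinear in $t$ and $g$. Because of the conjugated factor $\overline{t/\hat q}$, \eqref{eq:kolvRDd} cannot annihilate it, and it carries exactly the second-order information. For example, for type $(0,1)$ with $\hat p\equiv\hat a$ and the lift $s=0$, $t=g/\hat a$, one gets $Q=\sum_j\omega_j|g(z_j)/\hat q(z_j)|^2\bigl(1+2\RE(\hat e(z_j)\hat q(z_j)/\hat a)\bigr)$, whose sign Kolmogorov does not decide. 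The fallback via $(a,b)$ is empty: for $d=n+2$ one has $\sum_j\omega_j\alpha(z_j)=0$ for every $(a,b)$, which is precisely why $(a,b)$ dropped out of the dual LP.

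\textbf{The missing fact} is that the sesquilinear form $B(\bx,\by)=\bx^{\HH}H\by$ is real on $\bbS\times\bbS$. Take $\bx\leftrightarrow(s,t)$ and $\by\leftrightarrow(s',t')$, and use $|\hat e(z_j)|=\|\hat e\|_{\infty,{\cal X}}$. Then
\[
B(\bx,\by)=\sum_{j=1}^d\omega_j\Bigl[\overline{(t/\hat q)(z_j)}\,\sigma_j(g')+(t'/\hat q)(z_j)\,\overline{\sigma_j(g)}+\overline{(g/\hat q)(z_j)}\,(g'/\hat q)(z_j)\Bigr].
\]
On $\bbS$ the numbers $\sigma_j(g),\sigma_j(g')$ are purely imaginary, so the last term is real. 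Since $tg'-t'g=\hat q\,(t's-ts')$, this gives $\IM B(\bx,\by)=\IM\sum_j\omega_j\sigma_j(t's-ts')$. When $\upsilon=0$ we have $t's-ts'\in\bbP_n$, so this vanishes by \eqref{eq:kolvRDd}.

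Next, $\bbS$ contains a real form $V$ of $\bbC^N$, i.e.\ a real subspace with $V\oplus{\tt i}V=\bbC^N$. The admissible $g$ form a real space $K$ with $K\cap{\tt i}K=\{0\}$, because $D\Pi_n$ has full column rank. Lifting a real basis of $K$ and adjoining a real basis of the defect kernel gives $V$; this is the basis $\{(\theta_j,\vartheta_j)\}$ used in the paper's proof. For $\bx,\by\in V$,
$Q(\bx+{\tt i}\by)=Q(\bx)+Q(\by)-2\IM B(\bx,\by)=Q(\bx)+Q(\by)\ge0$, so $H\succeq 0$. This same reality (that $G$ is real symmetric in that basis) is what licenses the paper's step of testing only real $\bv$.

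\textbf{Degenerate case.} If $\upsilon>0$, you need every $(\hat pu,\hat qu)$ with $u\in\bbP_\upsilon$ to be null for $H$, not just $(\hat p,\hat q)$. This follows from your inequality. These vectors form a complex subspace of $\bbS$ on which $Q=0$, so $Q(c\bv+\bw)\ge0$ for all $c\in\bbC$ and $\bw\in\bbS$ forces $B(\bv,\bw)=0$. Then choose lifts with $\deg t<\deg\hat q$ when $\deg\hat q=n_2-\upsilon$, and with $\deg s<\deg\hat p$ otherwise. This gives $t's-ts'\in\bbP_{n-1}$, and the reality argument above goes through.
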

\begin{proof}
The proof is the same as \cite[Theorem 2.2]{rutt:1985}; we include a sketch for completeness. 

It suffices to establish necessity. Let $\hat{\xi}$ be a global minimax approximant. Under the assumptions, we can construct a basis $\{(\theta_j,\vartheta_j)\}_{j=1}^N$ for $\bbP_{n_1}\times \bbP_{n_2}$ \cite[p. 292-293]{rutt:1985} so that 
\begin{equation}\label{eq:realthetavartheta}
\RE\left(\overline{\hat e(x) \hat q(x)}(\vartheta_j(x)\hat p(x)-\theta_j(x)\hat q(x))/\hat q(x)\right)=0,~~\forall 1\le j\le N, \forall x\in {\cal E}(\hat \xi).
\end{equation}
Furthermore, by Corollary \ref{cor:Ruttan2ndD}, there exists a unique $\boldsymbol \omega=[\omega_1,\dots,\omega_d]^{\T}> \boldsymbol{0}$  such that $\boldsymbol \omega^{\T} \boldsymbol{\rho}(\bz) \geq 0$ and $\sum_{j=1}^d\omega_j=1$, where $\boldsymbol{\rho}(\bz)$ defined in \eqref{eq:varRkrho} corresponds to the $(s,t)$ pair specified in \eqref{eq:stconstrant} and $\bz$ contains entries $z_j\in {\cal E}(\hat \xi)$. Using this $\boldsymbol \omega$ and all points $z_j \in \mathcal{E}(\hat{\xi})$, we define
$G = \sum_{j=1}^d \omega_j G(z_j).$
We prove  the positive semi-definiteness of $G$ by contradiction. Assume $G\not \succeq 0$. As $G$ is Hermitian, there is a real vector $\bv=[v_1,\dots,v_N]^{\T}\in \bbR^N$ so that $\bv^{\T}G\bv<0$. Define 
$$(s,t)=\sum_{j=1}^N(\theta_j,\vartheta_j)v_j\in \bbP_{n_1}\times \bbP_{n_2} ~\mbox{and}~g(x)=\hat p(x)t(x)-\hat q(x)s(x).$$ By \eqref{eq:realthetavartheta} and $v_j\in \bbR$, it then follows that 
$$
 {2 \RE\left(\overline{\hat e(x)\hat q(x)}g(x)/\hat q(x)\right)}=0,~\forall x\in\mathcal{E}(\hat{\xi})\Longrightarrow (s,t)\in \bbS,
$$
giving $\boldsymbol \omega^{\T} \boldsymbol{\rho}(\bz) \geq 0$ by Corollary \ref{cor:Ruttan2ndD}. But on the other hand, since $h(x; s, t)=\rho(x)~\forall x\in {\cal E}(\hat\xi)$ and $h(x;s,t)=\bv^{\HH}G(x)\bv$, it holds that
$$0>\bv^{\T}G\bv=\sum_{j=1}^d \omega_j h(z_j; s,t)=\sum_{j=1}^d \omega_j \rho(z_j)=\boldsymbol \omega^{\T} \boldsymbol{\rho}(\bz),$$
a contradiction with  $\boldsymbol \omega^{\T} \boldsymbol{\rho}(\bz) \geq 0$. This shows that $G\succeq 0.$ 
\end{proof}

\section{Foundations for the  {\tt d-Lawson} iteration}\label{sec:dlawsonFound}
In this section, we assume that ${\cal B}={\cal X}=\{x_j\}_{j=1}^m \subseteq \Omega$ is a finite discrete set with $m\ge n_1+n_2+2$.  With the bases of $\bbP_{n_1}$ and $\bbP_{n_2}$ in \eqref{eq:basesP12}, we write   $\xi=p/q\in \scrR_{(n_1,n_2)}$ as 
\begin{equation} \label{eq:paramt_ab}
\xi(x)=\frac{p(x)}{q(x)}=\frac{[\psi_0(x),\dots,\psi_{n_1}(x)] \ba}{[\phi_0(x),\dots,\phi_{n_2}(x)] \bb},~~\mbox{for ~some~} \ba\in \bbC^{n_1+1}, ~\bb\in \bbC^{n_2+1},
\end{equation}  
and define coefficient matrices 
\begin{equation}\nonumber 
\Psi=\Psi(x_1,\dots,x_m;n_1):=\left[\begin{array}{cccc}\psi_0(x_1) & \psi_1(x_1) & \cdots & \psi_{n_1}(x_1) \\\psi_0(x_2) & \psi_1(x_2) & \cdots & \psi_{n_1}(x_2)  \\ \vdots & \cdots & \cdots& \vdots  \\\psi_0(x_m) & \psi_1(x_m) & \cdots & \psi_{n_1}(x_m) \end{array}\right],~\Psi_{i,j}=\psi_{j-1}(x_i);
\end{equation} 
analogously, we let $\Phi=\Phi(x_1,\dots,x_m;n_2)=[\phi_{j-1}(x_i)] \in \bbC^{m\times (n_2+1)}$.

In \cite{zhyy:2025}, a max-min type dual  formulation of the original minimax approximation \eqref{eq:bestf0} is developed and is defined by
\begin{equation}\label{eq:dual}
\max_{\bw\in {\cal S}} d(\bw),
\end{equation} 
with the dual function $d(\bw)$  given by ($\bw$ is the dual variable) 
\begin{align}\nonumber
d(\bw)&=\min_{\begin{subarray}{c}p\in \bbP_{n_1},~q\in \bbP_{n_2}\\
            \sum_{j=1}^m w_j |q(x_j)|^2=1\end{subarray}}\sum_{j=1}^m w_j |f_j q(x_j)-p(x_j)|^2\\\label{eq:rat-d-compt}
            &=\min_{\begin{subarray}{c}  \ba\in \bbC^{n_1+1},~ \bb\in \bbC^{n_2+1}\\
            \|\sqrt{W}\Phi \bb\|_2 =1\end{subarray}}\left\|\sqrt{W}[-\Psi,F\Phi]  \left[\begin{array}{c}\ba \\\bb\end{array}\right]\right\|_2^2,
\end{align}
where $F=\diag(f(x_1),\dots,f(x_m))$, $W=\diag(\bw)$ and the  probability simplex 
\begin{equation}\nonumber 
{\cal S}\triangleq\{\bw=[w_1,\dots,w_m]^{\T}\in \bbR^m: \bw\ge 0 ~{\rm and } ~\bw^{\T}\be=1\},~~\be=[1,\dots,1]^{\T}.
\end{equation}

For any $\bw\in {\cal S}$, we introduce the $\bw$-inner (positive semi-definite) product defined by $\langle\by,\bz \rangle_{\bw}=\by^{\HH}W\bz$ and $\|\by\|_{\bw}=\sqrt{\by^{\HH}W\by}$. The following proposition provides the optimality conditions for the pair $(\ba(\bw), \bb(\bw))$ to be the solution of \eqref{eq:rat-d-compt} at given $\bw\in {\cal S}$. 
\begin{proposition}{\rm (\cite[Corollary 2.7]{zhha:2025})}\label{prop:optimality}
Let $ \bp=\Psi \ba(\bw)\in\bbC^m$ and $\bq=\Phi \bb(\bw)\in\bbC^m$ be from the solution of \eqref{eq:rat-d-compt} with the weight vector $\bw\in {\cal S}$. Then 
\begin{equation}\label{eq:optimalitytwo}
F\bq-\bp\perp_{\bw}{\rm span}(\Psi),~~F^{\HH}(F\bq-\bp)-d(\bw)\bq\perp_{\bw}{\rm span}(\Phi).
\end{equation}
\end{proposition}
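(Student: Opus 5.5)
The statement to prove is Proposition \ref{prop:optimality}. It is a first-order optimality statement for the constrained least-squares problem \eqref{eq:rat-d-compt}, so the natural route is Lagrange multipliers (equivalently, a generalized Hermitian eigenvalue problem). The plan has four steps.

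\textbf{Step 1: set up the Lagrangian.} Write $A=[-\Psi,F\Phi]$ and $\bx=[\ba;\bb]$. The problem \eqref{eq:rat-d-compt} minimizes $\bx^{\HH}A^{\HH}WA\bx$ subject to $\bb^{\HH}\Phi^{\HH}W\Phi\bb=1$. Both the objective and the constraint are real-valued Hermitian quadratic forms. Taking Wirtinger derivatives of
$$\mathcal{L}=\bx^{\HH}A^{\HH}WA\bx-\mu\left(\bb^{\HH}\Phi^{\HH}W\Phi\bb-1\right)$$
with respect to $\overline{\ba}$ and $\overline{\bb}$ gives, at a minimizer, a real $\mu$ with
$$-\Psi^{\HH}W(F\bq-\bp)=\bzs,\qquad \Phi^{\HH}F^{\HH}W(F\bq-\bp)=\mu\,\Phi^{\HH}W\bq .$$
Here $\bp=\Psi\ba$, $\bq=\Phi\bb$ and $A\bx=F\bq-\bp$.

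\textbf{Step 2: make the multiplier rule rigorous.} The constraint is regular at the minimizer: its gradient with respect to $\overline{\bb}$ is $\Phi^{\HH}W\Phi\bb$, and
$$\bb^{\HH}\Phi^{\HH}W\Phi\bb=1\neq 0,$$
so this gradient is nonzero. To avoid invoking Lagrange theory over $\bbC$, one can argue directly instead.
\begin{itemize}
\item For any $\bc\in\bbC^{n_1+1}$, the perturbation $\ba+\epsilon\bc$ is feasible for all $\epsilon\in\bbC$. Expanding the objective to first order gives $\RE\big(\overline{\epsilon}\,\bc^{\HH}\Psi^{\HH}W(F\bq-\bp)\big)=0$ for all $\epsilon$, which yields the first equation.
\item For $\bb$, perturb along the constraint surface: take $\bb_\epsilon=(\bb+\epsilon\bd)/\|\sqrt{W}\Phi(\bb+\epsilon\bd)\|_2$. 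Differentiating at $\epsilon=0$ gives the second equation with $\mu$ a real scalar.
\end{itemize}

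\textbf{Step 3: identify $\mu=d(\bw)$.} Multiply the second equation on the left by $\bb^{\HH}$ and the first by $\ba^{\HH}$, then add them. This gives
$$\bq^{\HH}F^{\HH}W(F\bq-\bp)-\bp^{\HH}W(F\bq-\bp)=(F\bq-\bp)^{\HH}W(F\bq-\bp)=d(\bw).$$
The right-hand side equals $\mu\,\bq^{\HH}W\bq=\mu$, so $\mu=d(\bw)$.

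\textbf{Step 4: translate into $\bw$-orthogonality.} Since $\langle\by,\bz\rangle_{\bw}=\by^{\HH}W\bz$, the two equations say exactly that $F\bq-\bp\perp_{\bw}{\rm span}(\Psi)$ and $F^{\HH}(F\bq-\bp)-d(\bw)\bq\perp_{\bw}{\rm span}(\Phi)$, which is \eqref{eq:optimalitytwo}.

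The main obstacle is the careful handling of the complex variational step in Step 2. The objective is real but not holomorphic, so one has to use variations in both $\epsilon$ and ${\tt i}\epsilon$ (or Wirtinger calculus) to extract the full complex equations. One also needs the multiplier to be real, which holds because both quadratic forms are Hermitian. Existence of a minimizer is not an issue, since the proposition presupposes $(\ba(\bw),\bb(\bw))$ solves \eqref{eq:rat-d-compt}.
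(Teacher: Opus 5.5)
Your proposal is correct and takes essentially the paper's route. The paper does not prove the proposition itself (it cites the external reference), but its surrounding text characterizes the minimizer of \eqref{eq:rat-d-compt} by the generalized eigenproblem $[A_{\bw}-d(\bw)B_{\bw}][\ba;\bb]=\bzs$, which is exactly your stationarity system with multiplier $\mu=d(\bw)$, and reads off its two block rows as \eqref{eq:optimalitytwo}. Your direct perturbation argument in Step 2 and the identification $\mu=d(\bw)$ in Step 3 are both sound; note that only first-order information is needed here, not the positive semidefiniteness of $A_{\bw}-d(\bw)B_{\bw}$.
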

We shall show in Section \ref{subsec:foundLawson} that \eqref{eq:optimalitytwo} is just the Kolmogorov dual criteria \eqref{eq:kolvRD} when $\what \bw$ is the global solution of \eqref{eq:dual}. 

For solving the dual problem \eqref{eq:dual}, \cite{zhyy:2025} proposes the {\tt d-Lawson} iteration (Algorithm \ref{alg:Lawson}) whose convergence has been recently developed in \cite[Theorem 5.2]{zhha:2025}.

\begin{algorithm}[thb!!!]
\caption{The {\tt d-Lawson} iteration \cite{zhyy:2025} for \eqref{eq:bestf0} with ${\cal B}=\{x_j\}_{j=1}^m$} \label{alg:Lawson}
\begin{algorithmic}[1]
\renewcommand{\algorithmicrequire}{\textbf{Input:}}
\renewcommand{\algorithmicensure}{\textbf{Output:}}
\REQUIRE Given samples $\{(x_j,f(x_j))\}_{j=1}^m$ ($n_1+n_2+2\le m$) with $x_j\in \Omega$, a relative tolerance for the strong duality $\epsilon_r>0$, the maximum number $k_{\rm maxit}$ of iterations; 
        \smallskip
\STATE  (Initialization) Let $k=0$; choose $0<\bw^{(0)}\in {\cal S}$  and a  tolerance $\epsilon_w$ for the weights;
\STATE (Filtering) Remove nodes $x_i$ with $w_i^{(k)}<\epsilon_w$;
\STATE Compute  $d(\bw^{(k)})$ and the associated vector $\xi^{(k)}(\bx)={[\Psi \ba(\bw^{(k)})]./[\Phi \bb(\bw^{(k)})]}$  from \eqref{eq:rat-d-compt};
  
\STATE ({Stopping} rule)  Stop either if $k\ge k_{\rm maxit}$ or 
\begin{equation}\nonumber 
\epsilon(\bw^{(k)}):=\left|\frac{\sqrt{d(\bw^{(k)})}-\zeta(\xi^{(k)})}{\zeta(\xi^{(k)})}\right|<\epsilon_r,~~{\rm where}~~\zeta(\xi^{(k)})=\|f-\xi^{(k)}\|_{\infty,{\cal X}};
\end{equation} 
 
\STATE (Updating weights) Update the weight vector $\bw^{(k+1)}$ according to 
\begin{equation}\nonumber
w_j^{(k+1)}=\frac{w_j^{(k)}\left|f(x_j)-\xi^{(k)}(x_j)\right|^{\beta}}{\sum_{i}w_i^{(k)}\left|f(x_i)-\xi^{(k)}(x_i)\right|^{\beta}},~~\forall j,
\end{equation} 
with the Lawson exponent $\beta>0$, and {go to} Step 2 with $k=k+1$.
\end{algorithmic}
\end{algorithm}  
\subsection{Strong duality and Ruttan's sufficient global optimality}\label{subsec:RuttanStrong}

Let $\what \bw\in {\cal S}$ be any global maximizer of the dual problem \eqref{eq:dual} with the corresponding pair $(\ba(\what \bw),\bb(\what \bw))$ from \eqref{eq:rat-d-compt} at $\what \bw$. Let $\hat \xi$ be the irreducible representation of the rational function given \eqref{eq:paramt_ab} with coefficient vectors  $\ba(\what \bw)$ and $\bb(\what \bw)$, and
$
\|\hat e\|_{\infty,{\cal X}}=\max_{x_j\in {\cal X}}|f(x_j)-\hat \xi(x_j)|.
$
The strong duality is stated as \cite{zhyy:2025}
\begin{equation}\label{eq:strongdualityRuttan}
\sqrt{d(\what \bw)}=\|\hat e\|_{\infty,{\cal X}}.
\end{equation}

\begin{theorem}{\rm (\cite[Theorem 4.2]{zhyy:2025})}\label{thm:strongdualityeqvRuttan}
Suppose $\mathcal{B}={\cal X}=\{x_j\}_{j=1}^m$ containing $m\ge N=n_1+n_2+2$ distinct points. Ruttan's sufficient condition \eqref{eq:RuttanHc} is satisfied if and only if   strong duality \eqref{eq:strongdualityRuttan} holds. Thus, the global     minimax approximant $\hat\xi\in   \scrR_{(n_1,n_2)}$ of \eqref{eq:bestf0} can be computed from  \eqref{eq:dual} whenever Ruttan's sufficient condition \eqref{eq:RuttanHc} is satisfied.
\end{theorem}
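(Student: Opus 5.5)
We prove the two directions separately. Both rest on one identity: for $\bw\in{\cal S}$, any pair $(p,q)$ with coefficient vector $\bv=[\ba;\bb]$, and the $H(x)$ of \eqref{eq:Rutten-rank1} built with $\|\hat e\|_{\infty,{\cal X}}$,
\[
\bv^{\HH}\Big(\sum_{j=1}^m w_j H(x_j)\Big)\bv=\sum_{j=1}^m w_j|f(x_j)q(x_j)-p(x_j)|^2-\|\hat e\|_{\infty,{\cal X}}^2\sum_{j=1}^m w_j|q(x_j)|^2 .
\]
The first term is the objective of \eqref{eq:rat-d-compt} and the second sum is its normalization. Hence $\sum_j w_jH(x_j)\succeq 0$ is equivalent to $d(\bw)\ge\|\hat e\|^2_{\infty,{\cal X}}$, at least on pairs with $\sum_j w_j|q(x_j)|^2>0$.

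\emph{Ruttan $\Rightarrow$ strong duality.} Take the points $\{z_j\}\subseteq{\cal E}(\hat\xi)$ and weights $\omega_j$ from \eqref{eq:RuttanHc}, and define $\bw\in{\cal S}$ by placing $\omega_j$ at $z_j$ and zero elsewhere. Restricted to normalized pairs, the identity gives $d(\bw)\ge\|\hat e\|^2_{\infty,{\cal X}}$.

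For the reverse inequality we need weak duality, $d(\bw')\le\eta_{\cal X}^2$ for every $\bw'\in{\cal S}$. To see it, take any $\xi=p/q$ with $q(x_j)\ne0$ on ${\cal X}$ and normalize $q$. Then $\sum_j w'_j|f_jq_j-p_j|^2\le\|f-\xi\|_{\infty,{\cal X}}^2\sum_j w'_j|q_j|^2=\|f-\xi\|_{\infty,{\cal X}}^2$, and we take the infimum over such $\xi$. By Theorem~\ref{thm:RuttanSufOpt}, $\hat\xi$ is global, so $\eta_{\cal X}=\|\hat e\|_{\infty,{\cal X}}$. Combining, $\bw$ is a dual maximizer with $\sqrt{d(\bw)}=\|\hat e\|_{\infty,{\cal X}}$.

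It remains to show that the rational function produced by \eqref{eq:rat-d-compt} at this $\bw$ has maximal error exactly $\|\hat e\|_{\infty,{\cal X}}$. Any optimal pair $(p,q)$ attains $\sum_j\omega_j|f(z_j)-\xi(z_j)|^2|q(z_j)|^2=\|\hat e\|_{\infty,{\cal X}}^2\sum_j\omega_j|q(z_j)|^2$. This makes $\hat\xi$ itself a minimizer, so strong duality \eqref{eq:strongdualityRuttan} holds with that representative.

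\emph{Strong duality $\Rightarrow$ Ruttan.} Let $\what\bw$ be a dual maximizer with $\sqrt{d(\what\bw)}=\|\hat e\|_{\infty,{\cal X}}$. By weak duality, $\hat\xi$ is then a global solution. The minimizer $(\hat p,\hat q)$ of \eqref{eq:rat-d-compt} satisfies
\[
\sum_j\what w_j|\hat q(x_j)|^2\big(|\hat e(x_j)|^2-\|\hat e\|^2_{\infty,{\cal X}}\big)=0
\]
with every summand $\le0$. So every $x_j$ with $\what w_j>0$ and $\hat q(x_j)\ne0$ lies in ${\cal E}(\hat\xi)$.

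Next, $d(\what\bw)=\|\hat e\|^2$ means that for every normalized pair the quadratic form $\bv^{\HH}(\sum_j\what w_jH(x_j))\bv\ge0$, and by scaling this extends to every pair with $\sum_j\what w_j|q(x_j)|^2>0$. Taking $z_j$ to be the nodes in the support of $\what\bw$ and $\omega_j=\what w_j$ then gives \eqref{eq:RuttanHc}.

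\emph{Main obstacle.} Two degenerate situations need care. The first is pairs with $\sum_j\what w_j|q(x_j)|^2=0$. For these the form reduces to $\sum_j\what w_j|p(x_j)|^2\ge0$, so positive semidefiniteness survives. The second is support nodes where $\hat q$ vanishes, that is, the possible cancellation in the irreducible representation. I would first try to rule this out via the dual optimality conditions of Proposition~\ref{prop:optimality}, showing that $\hat q(x_j)\ne0$ on the support of $\what\bw$ whenever $f\notin\scrR_{(n_1,n_2)}$. If that fails, the fallback is to cite the treatment in \cite[Theorem 4.2]{zhyy:2025}.
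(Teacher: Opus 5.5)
Your proof is correct in outline. For strong duality $\Rightarrow$ \eqref{eq:RuttanHc} it uses the same idea as the paper: once $d(\what\bw)=\|\hat e\|_{\infty,\mathcal{X}}^2$, the sum $\sum_j \what w_j H(x_j)$ is exactly $H_{\what\bw}=A_{\what\bw}-d(\what\bw)B_{\what\bw}$, and $\what\bw$ is supported on $\mathcal{E}(\hat\xi)$.

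The paper imports both ingredients from \cite{zhyy:2025}: semidefiniteness of $H_{\what\bw}$ (Proposition 3.1 there) and $\what w_j=0$ off $\mathcal{E}(\hat\xi)$ (Theorem 2.4 there). It does not argue the converse at all. You do more:
\begin{itemize}
\item you derive semidefiniteness directly from the variational definition of $d(\what\bw)$ plus scaling, including the pairs with $\sum_j\what w_j|q(x_j)|^2=0$;
\item you get the support property from a sum of nonpositive terms;
\item you supply a self-contained converse through weak duality $d(\bw')\le\eta_{\mathcal{X}}^2$, and that converse is sound.
\end{itemize}
Two small points on the converse. First, the chain $\|\hat e\|_{\infty,\mathcal{X}}^2\le d(\bw)\le\eta_{\mathcal{X}}^2\le\|\hat e\|_{\infty,\mathcal{X}}^2$ already gives globality, so the appeal to Theorem~\ref{thm:RuttanSufOpt} is redundant. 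Second, $\hat\xi$ is an inner minimizer because $(\hat p,\hat q)$, normalized, attains $\|\hat e\|_{\infty,\mathcal{X}}^2=d(\bw)$, since $\bw$ lives on extreme points. Your sentence about ``any optimal pair'' does not by itself establish this.

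The loose end you flagged is a real step, and your first plan for it would not work. If $\hat\xi$ has positive defect, a minimizer of \eqref{eq:rat-d-compt} at $\what\bw$ can vanish at a support node $x_k$. Multiply the reduced pair by $x-x_k$: the result is still admissible, and since the support does lie in $\mathcal{E}(\hat\xi)$, its normalized objective is unchanged. So nothing in Proposition~\ref{prop:optimality} can force $q(x_j)\neq0$ on the support for an arbitrary minimizer.

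The fix is to run your summand argument on the reduced pair $(\hat p_0,\hat q_0)$ of $\hat\xi$ instead.
\begin{itemize}
\item Because $\|\hat e\|_{\infty,\mathcal{X}}=\sqrt{d(\what\bw)}<\infty$ and $\hat\xi$ is irreducible, $\hat q_0$ has no zeros on $\mathcal{X}$. So this pair can be normalized to $\sum_j\what w_j|\hat q_0(x_j)|^2=1$.
\item Its objective $\sum_j\what w_j|\hat q_0(x_j)|^2|\hat e(x_j)|^2$ is then at most $\|\hat e\|_{\infty,\mathcal{X}}^2=d(\what\bw)$, so it is itself a minimizer.
\item Hence every term $\what w_j|\hat q_0(x_j)|^2\bigl(\|\hat e\|_{\infty,\mathcal{X}}^2-|\hat e(x_j)|^2\bigr)$ vanishes, and every node with $\what w_j>0$ lies in $\mathcal{E}(\hat\xi)$.
\end{itemize}
This recovers the fact the paper cites from \cite[Theorem 2.4]{zhyy:2025} and makes your argument complete without the fallback citation.
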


To understand this connection, we first point out that the optimality condition for the pair $(\ba(\what{\bw}), \bb(\what{\bw}))$ to be a minimizer of \eqref{eq:rat-d-compt} at $\what{\bw}=[\what w_1,\dots,\what w_m]^{\T}\in {\cal S}$ is 
\begin{equation}\label{eq:GEP}
[A_{\what \bw} -d(\what \bw) B_{\what \bw}]\left[\begin{array}{c}\ba(\what{\bw}) \\\bb(\what{\bw})\end{array}\right]=\bzs,
\end{equation}
and the matrix $H_{\what \bw}= A_{\what \bw} -d(\what \bw) B_{\what \bw}$ is  positive semi-definite  \cite[Proposition 3.1]{zhyy:2025}, where  
\begin{align*}
 H_{\what \bw}= A_{\what \bw} -d(\what \bw) B_{\what \bw}= \left[\begin{array}{cc}\Psi^{\HH}W\Psi & -\Psi^{\HH}FW\Phi \\-\Phi^{\HH} WF^{\HH}\Psi & \Phi^{\HH}F^{\HH}WF\Phi-d(\what\bw)\Phi^{\HH}W\Phi\end{array}\right].\end{align*}
Equivalently, the optimality condition can be expressed as follows:
the pair $\left( d(\widehat{\bw}), \begin{bmatrix} \ba(\widehat{\bw}) \\ \bb(\widehat{\bw}) \end{bmatrix} \right)$
is an eigenpair of the matrix pencil $(A_{\widehat{\bw}}, B_{\widehat{\bw}})$ corresponding to its smallest eigenvalue $d(\widehat{\bw})$.
In particular, condition \eqref{eq:GEP} holds if and only if \eqref{eq:optimalitytwo} is satisfied.
 
Now, if \eqref{eq:strongdualityRuttan} is true, by the definitions of $H_k(x)$ for $k=1,2,3$ in \eqref{eq:Rutten-rank1} and \eqref{eq:Hk},  it holds that $$[\Psi^{\HH}\be_j\be_j^{\T}\Psi]_{\ell,k}=\be_{\ell}^{\T}\Psi^{\HH}\be_j\be_j^{\T}\Psi\be_k=\overline{\psi_{\ell-1}(x_j)}\psi_{k-1}(x_j),$$ and thus
 \begin{subequations}\nonumber
 \begin{align}\label{eq:HwRuttan1}
 &\Psi^{\HH}W\Psi=\sum_{j=1}^m \what w_j  \Psi^{\HH}\be_j\be_j^{\T}\Psi=\sum_{j=1}^m \what w_j   H_1(x_j),~ -\Psi^{\HH}FW\Phi=\sum_{j=1}^m \what w_j   H_3(x_j),\\\label{eq:HwRuttan2}
 &\Phi^{\HH}F^{\HH}WF\Phi-d(\what \bw)\Phi^{\HH}W\Phi=\sum_{j=1}^m \what w_j   H_2(x_j);
 \end{align}
 \end{subequations}
 consequently,  by \eqref{eq:Rutten-rank1} and \eqref{eq:Hk}, 
 $$0 \preceq H_{\what \bw}=\sum_{j=1}^m \what w_j  H(x_j)=\sum_{j: x_j\in{\cal E}(\hat \xi)} \what w_j H(x_j),$$
 where we have used the fact \cite[Theorem 2.4]{zhyy:2025}: $\what w_j=0 ~\forall x_j\not \in {\cal E}(\hat \xi)$. Thus,   as  $|{\cal E}(\hat \xi)|\ge 1$  and $\what\bw\in {\cal S}$, the matrix $ H_{\what \bw}$ serves as the positive semi-definite Hermitian matrix $H$ in \eqref{eq:RuttanHc}. That is,  Ruttan's sufficient condition \eqref{eq:RuttanHc} is fulfilled under  \eqref{eq:strongdualityRuttan}.

Unlike the impracticality of verifying Ruttan's sufficient condition \eqref{eq:RuttanHc}  in Theorem \ref{thm:RuttanSufOpt}, the equivalence condition \eqref{eq:strongdualityRuttan} provides a numerically viable alternative. Specifically, we can solve the dual problem \eqref{eq:dual} using, for example, the {\tt d-Lawson} iteration, and validate \eqref{eq:strongdualityRuttan} numerically at the obtained solution. Furthermore, the duality gap serves as a practical termination criterion (see Step 4 in Algorithm \ref{alg:Lawson}) for the {\tt d-Lawson} iteration.

\subsection{{\tt d-Lawson} meets Kolmogorov's condition under strong duality}\label{subsec:foundLawson}
In this subsection, we show that under strong duality \eqref{eq:strongdualityRuttan}, if $\what \bw\in {\cal S}$ is the global maximizer of the dual problem \eqref{eq:dual}, the optimality condition in  \eqref{eq:optimalitytwo} is indeed the   Kolmogorov dual criteria \eqref{eq:kolvRD}. To see this, let $\what \bp=[\hat p(x_1),\dots,\hat p(x_m)]^{\T}$ and $\what \bq=[\hat q(x_1),\dots,\hat q(x_m)]^{\T}$ where $(\hat p,\hat q)$ is a pair of polynomials associated with $\what\bw$. 

Consider $F\what\bq-\what\bp\perp_{\what\bw}{\rm span}(\Psi)$ first.  Based on the fact \cite[Theorem 2.4]{zhyy:2025} that $\what w_j=0 ~\forall x_j\not \in {\cal E}(\hat \xi)$, one can write it as 
$$
\sum_{j=1}^m \what w_j \overline{(f(x_j)\hat q(x_j)-\hat p(x_j))}\psi_i(x_j)=\sum_{j: x_j\in {\cal E}(\hat \xi)}  \what w_j \overline{(\hat e(x_j)\hat q(x_j))}\psi_i(x_j)=0,~i = 0,\dots,n_1,
$$
where we assume $\hat{q}(x_j) \neq 0$, since otherwise the boundedness of $\hat{\xi}(x_j)$ would imply $\hat{p}(x_j) = 0$, leaving the sum unchanged.  The above condition is equivalent to \eqref{eq:kolvRD} by choosing $g(x)=\hat q(x)\psi_i(x)\in \bbP_{n}$.

Now, consider the second condition $F^{\HH}(F\what\bq-\what \bp)-d(\what\bw)\what \bq\perp_{\what\bw}{\rm span}(\Phi)$. By strong duality $\sqrt{d(\what \bw)}=\|\hat e\|_{\infty,{\cal X}}=|\hat e(x)|,~\forall x\in {\cal E}(\hat \xi)$, one can write it as ($\forall i=0,\dots,n_2$)
\begin{align*}
0&=\sum_{j=1}^m \what w_j \left(f(x_j)\overline{(f(x_j)\hat q(x_j)-\hat p(x_j))}-\|\hat e\|_{\infty,{\cal X}}^2 \cdot \overline{\hat q(x_j)}\right)\phi_i(x_j) \\
&=\sum_{j: x_j\in {\cal E}(\hat \xi)}  \what w_j \left(f(x_j)\overline{(f(x_j)\hat q(x_j)-\hat p(x_j))}- |\hat e(x_j)|^2 \cdot \overline{\hat q(x_j)}\right)\phi_i(x_j)\\
&=\sum_{j: x_j\in {\cal E}(\hat \xi)}  \what w_j \left(f(x_j)\overline{(\hat e(x_j) \hat q(x_j))}- \overline{(\hat e(x_j) \hat q(x_j))}\cdot  {\hat e(x_j)}\right)\phi_i(x_j)\\
&=\sum_{j: x_j\in {\cal E}(\hat \xi)}  \what w_j \left( \overline{(\hat e(x_j) \hat q(x_j))} \hat p(x_j)/\hat q(x_j) \right)\phi_i(x_j),
\end{align*}
which corresponds to \eqref{eq:kolvRD} with $g(x)=\hat p(x)\phi_i(x)\in \bbP_{n}$. 

From the above analysis, we find that, under strong duality \eqref{eq:strongdualityRuttan}, the solution computed by {\tt d-Lawson} automatically identify the associated the support extreme points $x_j$ and the corresponding weights $\omega_j\ge 0$ in the Kolmogorov dual criteria \eqref{eq:kolvRD}.
\section{From discrete boundary to continuum minimax solution}\label{sec:boundcont}

In many scenarios, it is desirable to compute the rational minimax approximant of a function $f$ defined on a continuum $\Omega \subset \bbC$. Common examples include closed intervals on the real or imaginary axis, the unit circle, or simply connected domains bounded by a Jordan curve $\Gamma = \partial \Omega$. 
For such cases, we assume $f \in \mathbf{C}_{A}(\Omega)$, meaning $f$ is continuous on $\Omega \cup \Gamma$ and analytic in $\mathrm{int}(\Omega)$ whenever the interior is non-empty. 
The study of rational minimax approximation in these settings holds both mathematical significance (see, e.g., \cite{bern:1938,saff:995,sast:1997,stah:1993,stah:1994,stah:2003,tref:1981,regu:1983}) and substantial practical value across applications (cf. \cite{drnt:2024,gamp:2018,guti:2017,limp:2022,nafr:2016,saem:2020,tapo:2014,tref:2019a}).

In contrast to rational approximations, the theory of polynomial minimax approximations is well-established and theoretically profound. A key distinction lies in the existence of necessary and sufficient conditions for arbitrary compact sets $\mathcal{B} \subset \bbR$ or $\bbC$, making polynomial approximations uniquely tractable. This comprehensive theoretical framework has enabled the development of efficient numerical methods for various applications. Notable among these is the classical Remez algorithm \cite{reme:1934,reme:1934b}, with modern implementations discussed in \cite{drht:2014,patr:2009}. Furthermore, extensive research has characterized the density and distribution of extreme points in minimax polynomial approximation (see e.g., \cite{blat:1989,kroo:1981,krsa:1988,lore:1984}), and  asymptotic behavior of zeros of minimax polynomials  (e.g., \cite{blis:1987,grsa:1988}).  
 
 The analysis becomes considerably more challenging for rational minimax approximations. As noted in Section \ref{sec:intro}, the optimality conditions governing both local and global rational minimax approximations are significantly less developed compared to their linear counterparts \cite{chen:1982,mein:1967,rice:1969,rish:1961,shap:1971,sing:1970}. Unlike linear approximations, rational minimax problems exhibit pronounced domain dependence, and no universally applicable necessary and sufficient conditions exist for characterizing global solutions \cite{gutk:1983,sing:2006}. Moreover, fundamental differences emerge in the optimality criteria across (a) compact continuum domains versus discrete point sets, and (b) real-valued versus complex-valued function approximations.

To frame our subsequent discussion, let $\Omega \subset \mathbb{C}$ be a Jordan domain bounded by a rectifiable Jordan curve $\Gamma = \partial\Omega$, and consider $f\in {\bf C}_{A}(\Omega)\setminus \scrR_{(n_1,n_2)}$. Clearly, obtaining an explicit formulation for the minimax approximant $\hat{\xi}$ is generally intractable for arbitrary $\Omega$. Instead, a practical computational approach involves solving a discrete minimax approximation problem over a judiciously selected finite set $\mathcal{X} = \{x_j\}_{j=1}^m \subset \Gamma$. One theoretical foundation for this scheme is the maximum module principle: the extreme points associated with  $\hat \xi$ must be in the boundary $\Gamma$. While this discrete approach proves effective for polynomial approximation when $\mathcal{X}$ contains the extreme points in an extremal signature \cite{rish:1961} (see also the generalized de la Vall\'ee Poussin Theorem \cite[Theorem 2.3.3.1]{shap:1971}) or satisfies appropriate density conditions on $\Gamma$ \cite[Chapter 3]{chen:1982}, the situation becomes more nuanced for rational approximations.

To illustrate the intrinsic features of rational minimax approximation, we consider an example from Gutknecht and Trefethen \cite{regu:1983} where $\Omega={\cal D}$ denotes the unit disk $\mathcal{D} = \{x \in \mathbb{C} : |x| \leq 1\}$ and $f(x)=x^3+x$. This example demonstrates the nonuniqueness of best rational approximants of type $(0,1)$. Numerical results from \cite{this:1993} give the computed minimax type $(0,1)$ rational approximant and its maximum error as
$$\hat \xi_{\cal D}(x)\approx \frac{0.2993-{\tt i}\cdot 0.068}{x-1.1194  e^{{\tt i}\cdot 1.1490}},~~\|\hat \xi_{\cal D}(x)\|_{\infty,{\cal D}}\approx1.9274,$$
respectively, 
with extreme points at $e^{{\tt i}\varphi_j}\in \Gamma$ for $\varphi_1\approx 1.1335$, $\varphi_2\approx 3.3449$, and $\varphi_3\approx 6.0125$. Notably, Ruttan's sufficient global optimality \eqref{eq:RuttanHc} is not satisfied here \cite{this:1993}.
For the restriction to the boundary $\Gamma$, \cite[Example 5]{this:1993} establishes the uniqueness of the minimax type $(0,1)$ approximant:
$\hat \xi_{\Gamma}(x)=\frac{2/(\sqrt{33}+1)}{x}$
achieving $\|\hat \xi_{\Gamma}(x)\|_{\infty,{\Gamma}}=\sqrt{18/(69-11\sqrt{33})}\approx 1.7602$, with four extreme points on $\Gamma$ where Ruttan's sufficient global optimality \eqref{eq:RuttanHc} does hold \cite{this:1993}.

Based on this observation, we conclude that for any discrete set $\mathcal{X}\subset\Gamma$, the corresponding type $(0,1)$ minimax approximant $\hat{\xi}_{\mathcal{X}}$ (when it exists) cannot equal or converge to $\hat{\xi}_{\mathcal{D}}$. Specifically,
$$\eta_{\mathcal{X}} = \inf_{\xi\in\mathscr{R}_{(0,1)}} \|f-\xi\|_{\infty,\mathcal{X}}  
\leq \inf_{\xi\in\mathscr{R}_{(0,1)}} \|f-\xi\|_{\infty,\Gamma} \approx 1.7602
< \inf_{\xi\in\mathscr{R}_{(0,1)}} \|f-\xi\|_{\infty,\Omega} \approx 1.9274.$$

This example motivates a sufficient condition ensuring that the rational minimax approximant $\hat \xi_{\Omega}$ of $f\in {\bf C}_{A}(\Omega)\setminus \scrR_{(n_1,n_2)}$ on $\Omega$ can be obtained by solving a discrete approximation problem with proper sampled nodes ${\cal X} \subset \Gamma$. 
\begin{theorem}\label{thm:sufXtoD}
Let $\Omega \subset \mathbb{C}$ be a Jordan domain bounded by a   Jordan curve $\Gamma = \partial\Omega$, and consider $f\in {\bf C}_{A}(\Omega)\setminus \scrR_{(n_1,n_2)}$. Assume that the minimax approximant $\hat \xi_{\Omega}\in \scrR_{(n_1,n_2)}$  of \eqref{eq:bestf0} satisfies Ruttan's   sufficient global optimality \eqref{eq:RuttanHc}. Then we have 
\begin{itemize}
\item[(i)] $\hat \xi_{\Omega}\in \scrR_{(n_1,n_2)}$ is also a  type $(n_1,n_2)$ minimax approximant of $f$ on the boundary $\Gamma$;
\item[(ii)] if ${\cal X}=\{x_j\}_{j=1}^m\subset \Gamma$ contains a set of extreme points $\{z_j\}_{j=1}^r\subseteq {\cal E}(\hat \xi_{\Omega})\subset \Gamma$ associated with $\hat \xi_{\Omega}$ so that Ruttan's sufficient global optimality \eqref{eq:RuttanHc} holds, then $\hat \xi_{\Omega}\in \scrR_{(n_1,n_2)}$ is also a   type $(n_1,n_2)$ minimax approximant of $f$ on ${\cal X}$.
\end{itemize}
\end{theorem}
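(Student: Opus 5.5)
The plan is to reduce both parts to Theorem \ref{thm:RuttanSufOpt}, applied to a new compact set $\mathcal{B}$ ($\Gamma$ for (i), $\mathcal{X}$ for (ii)). The key observation is that Ruttan's condition \eqref{eq:RuttanHc} depends only on three things: the chosen extreme points $\{z_j\}_{j=1}^r$, the weights $\{\omega_j\}$, and the number $\|\hat e\|_{\infty,\mathcal{B}}$ that enters $H_2(x)$ in \eqref{eq:H2}. So if the same points stay extreme points for the new set, and the maximum error is unchanged, the same matrix $H=\sum_j\omega_j H(z_j)\succeq 0$ certifies global optimality on the new set.

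First I will check that the extreme points lie on $\Gamma$ and that the maximum error is preserved. Write $\hat\xi_\Omega=\hat p/\hat q$ (irreducible). Since $\hat\xi_\Omega$ is a minimax approximant on $\Omega$ with finite error, $\hat q$ has no zeros in $\Omega$ (if it had a pole in $\Omega$ the error would be infinite, and $f$ is continuous). Hence $\hat e=f-\hat\xi_\Omega$ is continuous on $\Omega$ and analytic in $\mathrm{int}(\Omega)$. By the maximum modulus principle, $\|\hat e\|_{\infty,\Gamma}=\|\hat e\|_{\infty,\Omega}$. Moreover, every $z\in\mathcal{E}_\Omega(\hat\xi_\Omega)$ lies in $\Gamma$: an interior maximum would force $\hat e$ to be constant, and then $|\hat e|$ would be constant on $\Omega$, so every point is extreme anyway. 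In either case $\mathcal{E}_\Omega(\hat\xi_\Omega)\cap\Gamma=\mathcal{E}_\Gamma(\hat\xi_\Omega)\supseteq\{z_j\}$. I expect this degenerate-case bookkeeping, together with confirming that the pole-free property is legitimately available, to be the only real subtlety. In the nondegenerate case the $z_j$ from \eqref{eq:RuttanHc} are automatically on $\Gamma$.

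For part (i), take $\mathcal{B}=\Gamma$, which is compact and infinite, so it contains at least $N$ points; also $f|_\Gamma\notin\scrR_{(n_1,n_2)}$. I will check this via the identity theorem or the Jordan domain structure: if $f=\xi$ on $\Gamma$, then $f-\xi$ is analytic inside and vanishes on the boundary, so $f\equiv\xi$ on $\Omega$, a contradiction. With $\|\hat e\|_{\infty,\Gamma}=\|\hat e\|_{\infty,\Omega}$, the matrices $H(z_j)$ built for $\Gamma$ coincide with those built for $\Omega$. So $\sum\omega_jH(z_j)\succeq0$ still holds, and Theorem \ref{thm:RuttanSufOpt} gives global optimality on $\Gamma$.

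For part (ii), take $\mathcal{B}=\mathcal{X}\subset\Gamma$ with $\{z_j\}\subseteq\mathcal{X}$. Then $\|\hat e\|_{\infty,\mathcal{X}}\le\|\hat e\|_{\infty,\Omega}=|\hat e(z_j)|\le\|\hat e\|_{\infty,\mathcal{X}}$, so equality holds and the $z_j$ are extreme points for $\mathcal{X}$. We also need $f|_{\mathcal{X}}\notin\scrR_{(n_1,n_2)}$, which I would argue as follows: if some $\xi$ interpolated $f$ on $\mathcal{X}$, the error on $\mathcal{X}$ would be $0<\|\hat e\|_{\infty,\mathcal{X}}$. That would contradict the Ruttan certificate, which itself shows that no rational function beats $\|\hat e\|_{\infty,\mathcal{X}}$ on $\mathcal{X}$. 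So the argument in the proof following Theorem \ref{thm:RuttanSufOpt} applies directly, with the non-attainment case already handled there, provided $m\ge N$. The matrix $H$ is again unchanged, so Theorem \ref{thm:RuttanSufOpt}, valid for finite sets per the discussion after it, yields (ii).
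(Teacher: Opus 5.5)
Your proposal is correct and follows the paper's argument. The maximum modulus principle gives $\|\hat e\|_{\infty,\Gamma}=\|\hat e\|_{\infty,\Omega}$, and also $=\|\hat e\|_{\infty,\mathcal{X}}$ once the $z_j$ lie in $\mathcal{X}$, so the same certificate $H=\sum_j\omega_jH(z_j)\succeq 0$ can be fed into Theorem \ref{thm:RuttanSufOpt} with $\mathcal{B}=\Gamma$ and $\mathcal{B}=\mathcal{X}$; your hypothesis checks (pole-freeness of $\hat\xi_\Omega$, $f\notin\scrR_{(n_1,n_2)}$ on the new sets, $m\ge N$) make explicit what the paper leaves implicit.

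Two small fixes are needed:
\begin{itemize}
\item If $\hat e$ is constant on $\Omega$, the certificate points could be interior, so your claim ``$\mathcal{E}_\Gamma(\hat\xi_\Omega)\supseteq\{z_j\}$ in either case'' is unjustified. You should assume $z_j\in\Gamma$, exactly as the paper tacitly does (its statement writes $\mathcal{E}(\hat\xi_\Omega)\subset\Gamma$).
\item For $f|_\Gamma\notin\scrR_{(n_1,n_2)}$, $f-\xi$ need not be analytic inside, since $\xi=p/q$ may have poles in $\mathrm{int}(\Omega)$. Argue instead with $fq-p$, which is analytic inside regardless, or reuse your certificate argument from part (ii).
\end{itemize}
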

\begin{proof}
By the maximum modulus principle, the supremum norm $\|f - \hat \xi_{\Omega}\|_{\infty,\Omega}$ is attained on the boundary $\Gamma$. Thus, the result follows directly from Theorem \ref{thm:RuttanSufOpt}, taking ${\cal B} = \Gamma$ for part (i) and ${\cal B} = {\cal X}$ for part (ii).
\end{proof}
\section{Concluding remarks}\label{sec:conclude}

This paper presents a theoretical analysis of optimality conditions for rational minimax approximations under varying domains ${\cal B}$. We primarily examine Ruttan's optimality conditions and extend their applicability from discrete rational minimax approximations to continuum counterparts. Our key results include
\begin{itemize}
\item  Optimality conditions: We systematically reviewed and extended Ruttan's primal and dual formulations of second-order optimality conditions, validating their applicability to finite discrete sets (Section \ref{subsec:Ruttan2nd}).
For global minimax solutions, we proved in Theorem \ref{thm:Ruttansufnec} that Ruttan's sufficient condition becomes necessary when $|{\cal E}(\hat{\xi})| = n_1 + n_2 + 2 - \upsilon(\hat{p}, \hat{q})$, generalizing prior results (Section \ref{subsec:ruttanglobalNec}).

\item
Connection to numerical methods: We demonstrated that the recently proposed dual-based framework in the {\tt d-Lawson} iteration \cite{zhyy:2025,zhha:2025} has closed relations with both the Kolmogorov criteria and Ruttan's conditions, providing a theoretical foundation for  the {\tt d-Lawson} iteration.  Specifically, under strong duality \eqref{eq:strongdualityRuttan}, both Ruttan's sufficient condition \eqref{eq:RuttanHc} and the   Kolmogorov dual criteria \eqref{eq:kolvRD} hold at the global solution of the dual problem \eqref{eq:dual}.

\item
Continuum approximations: For continuum domains, we established in Theorem \ref{thm:sufXtoD} that when minimax solutions on the full domain satisfy Ruttan's condition, they can be recovered through discrete approximations  with  ${\cal X}$ sampled at appropriate boundary points. This fundamental connection enables computing continuum minimax approximants via practical discrete methods.

\end{itemize}
Our results offer new theoretical perspectives for the {\tt d-Lawson} iteration \cite{zhyy:2025,zhha:2025}. We hope these developments may lead to natural extensions:  by reformulating  the dual function \eqref{eq:rat-d-compt} in terms of integrals \cite[Section 2.3]{shap:1971}, the max-min duality framework \eqref{eq:dual} could potentially be adapted for  the rational minimax problems on a continuum, while Kolmogorov's and Ruttan's optimality conditions might also find meaningful generalizations to matrix-valued cases \cite{zhzz:2025}.
  
\def\noopsort#1{}\def\l{\char32l}\def\v#1{{\accent20 #1}}
  \let\^^_=\v\def\hbk{hardback}\def\pbk{paperback}
\providecommand{\href}[2]{#2}
\providecommand{\arxiv}[1]{\href{http://arxiv.org/abs/#1}{arXiv:#1}}
\providecommand{\url}[1]{\texttt{#1}}
\providecommand{\urlprefix}{URL }

\end{document}